\theoremstyle{plain}
\newtheorem{thm}{Theorem}[section]
\newtheorem{lem}[thm]{Lemma}
\newtheorem{prop}[thm]{Proposition}
\newtheorem{ques}{Question}
\theoremstyle{definition}
\newtheorem{de}[thm]{Definition}
\newtheorem{exam}[thm]{Example}
\newtheorem{rem}[thm]{Remark}
\numberwithin{equation}{section}
\def \N {\mathbb{N}}
\def \Z {\mathbb{Z}}
\def \R {\mathbb{R}}
\def \Q {\mathbb{Q}}
\def \bbh {\mathbb{H}}
\def \O {\mathcal{O}}
\def \F {\mathcal F}
\def \G {\mathcal{G}}
\def \B {\mathcal B}
\def \P {\mathcal P}
\def \X {\mathcal{X}}
\def \a {\alpha }
\def \b {\beta}
\def \ep {\epsilon}
\def \d {\delta}
\def \D {\Delta}
\def \lra {\longrightarrow}
\begin{document}
\title{Multiply minimal points for the product of iterates\\
\medskip
({\small \rm Dedicated To
Professor Benjamin Weiss})
}

\author{Wen Huang}
\author{Song Shao}
\author{Xiangdong Ye}

\address{CAS Wu Wen-Tsun Key Laboratory of Mathematics, and
Department of Mathematics, University of Science and Technology of China, Hefei, Anhui, 230026, P.R. China}

\email{wenh@mail.ustc.edu.cn}
\email{songshao@ustc.edu.cn}
\email{yexd@ustc.edu.cn}

\subjclass[2010]{Primary: 37B05; 54H20}

\thanks{This research is supported by NNSF of China (11971455, 12031019, 11731003,  11571335 ).}


\begin{abstract}
The multiple Birkhoff recurrence theorem states that for any $d\in\mathbb N$, every system $(X,T)$ has a
multiply recurrent point $x$, i.e. $(x,x,\ldots, x)$ is recurrent under $\tau_d=:T\times T^2\times \ldots \times T^d$.
It is natural to ask if there always is a multiply minimal point, i.e. a point $x$ such that $(x,x,\ldots,x)$
is $\tau_d$-minimal. A negative answer is presented
in this paper via studying the horocycle flows.

However, it is shown that for any minimal system $(X,T)$ and any non-empty open set $U$,
there is $x\in U$ such that $\{n\in{\mathbb Z}: T^nx\in U, \ldots, T^{dn}x\in U\}$ is piecewise syndetic; and
that for a PI minimal system, any $M$-subsystem of $(X^d, \tau_d)$ is minimal.


\end{abstract}

\maketitle

\markboth{Multiply minimal points for the product of iterates}{S. Shao W. Huang and X.D. Ye}


\tableofcontents 


\section{Introduction}

By a {\em topological dynamical system} (t.d.s. or system for short) we mean a pair $(X,T)$,
 where $X$ is a compact metric space (with a metric $\rho$) and $T:X \to X$ is a homeomorphism. 
For $x\in X$, the {\it orbit of $x \in X$} is defined by $\O(x,T)=\{T^nx: n\in \Z\}$. 
A t.d.s. $(X,T)$ is {\it transitive} if for any non-empty open sets $U$ and $V$, there is $n\in\Z$
with $U\cap T^{-n}V\not=\emptyset$. Any point in a transitive system with dense orbit is called a {\it transitive point}.
A point $x$ is {\it recurrent} if there is a sequence $\{n_i\}$ of $\Z$ with $n_i\rightarrow \infty$ and $T^{n_i}x\lra x$.
A t.d.s. $(X,T)$ is {\it weakly mixing} if the product system $(X\times X,T\times T)$ is transitive.
A t.d.s. $(X,T)$ is {\it minimal} if $\O(x,T)$ is dense in $X$ for every $x\in X$. A point $x \in X $
is	{\it minimal} or {\it uniformly recurrent} if the subsystem $(\overline{\O (x,T)},T)$ is minimal.

\medskip
The multiple Birkhoff recurrence theorem can be deduced from the multiple recurrence theorem
of Furstenberg \cite{F77} which was proved by using deep ergodic tools. It is Furstenberg and Weiss \cite{FW} who
presented a topologically dynamical proof  of the theorem. 
That is, using purely topological tools they showed that if $(X,T)$ is a t.d.s. 
and $d\in \N$, then there exist some $x\in X$ and some sequence $\{n_k\}$ of $\N$ with $n_i\to +\infty$ and $T^{in_k}x\to x$
simultaneously for $i=1,\ldots,d$. We refer to this theorem as {\it the multiple Birkhoff recurrence theorem}. And such a
point $x$ is called a {\em multiply recurrent point}, i.e., $x^{(d)}=:(x,x,\ldots, x)$ is a recurrent point
of $\tau_d=:T\times T^2\times \ldots \times T^d$.

Recall that a t.d.s. $(X,T)$ is {\it distal} if for any $x\not=y\in X$, $\liminf_{n\in \Z}\rho(T^nx, T^ny)>0$.
For a distal system $(X,T)$  and any $x\in X$, $(x,x,\ldots, x)$ is not only a recurrent point of $\tau_d$,
but also a uniformly recurrent point of $\tau_d$. Thus in \cite[Page 231]{F81},
Furstenberg asked a question about multiple uniform recurrence:

\medskip

 {\em ``We will see in the next section that the latter property (means multiple recurrence) always holds for
some point of any system $(X,T)$. On the other hand we do not know if there always exists a point
$x$ such that $(x,x,\ldots,x)$ is a uniformly recurrent point for $T\times T^2\times \ldots\times T^r$." }

\medskip

We shall call such a point $x$ a {\em multiply minimal point}, i.e. $(x,x,\ldots, x)$ is a minimal point of $\tau_d$.
As mentioned above, for a distal system all points are multiply minimal.
Recall that a system $(X, T)$ is {\it doubly minimal} if the orbit closure of any pair
$(x,y)$ under $T\times T$ is either all of $X\times X$ or the graph of a power of $T$.
For a doubly minimal system $(X,T)$, all points are multiply minimal, in fact,
$(X^d, \tau_d)$ is minimal for all $d\in \N$ \cite{AM85}.
Weiss \cite{Weiss95} showed that any zero entropy ergodic measure theoretic system has a uniquely ergodic model which is
doubly minimal.

Although there are so many positive evidences showing the existence of multiply minimal
points, in this paper we demonstrate that there exists a system $(X,T)$ without any multiply
minimal points, which gives a negative answer to the question above. Namely, as one of the main results
we have

\medskip
\noindent {\bf Theorem A:} {\it There is a minimal weakly mixing system $(X,T)$ without any multiply minimal
points. In fact, for all $x\in X$, $(x,x)$ is a transitive point of
$(X\times X, T\times T^2)$ but not minimal.}

\medskip

Recall that a subset $S$ of $\Z$ is {\it syndetic} if it has a bounded gap,
i.e. there is $N\in \N$ such that $\{i,i+1,\cdots,i+N\} \cap S \neq
\emptyset$ for every $i \in {\Z}$. $S$ is {\it thick} if it
contains arbitrarily long runs of 
integers.
A subset $S$ of $\Z$ is {\it piecewise syndetic} if it is an
intersection of a syndetic set with a thick set. A classic result states that $x$ is a minimal
point if and only if $N_T(x,U):= \{n\in \Z: T^nx\in U\}$ is syndetic for any neighborhood
$U$ of $x$ \cite{GH}. Since we have showed that there is a system $(X,T)$ without
any multiply minimal points, it is natural to ask a weaker question: let $(X,T)$ be a t.d.s. and $d\in \N$.
Is there a point $x\in X$ such that $(x,x,\ldots,x)$ is piecewise syndetic recurrent? That is, for any neighborhood
$U$ of $x$, is the set $N_{\tau_d}(x^{(d)}, U^d)$ piecewise syndetic?

\medskip

Note that we always can find some point $x$ in a t.d.s. $(X,T)$, which is multiply positive upper density recurrent.
Furstenberg \cite{F77} proved
Szemer\'edi's theorem via the following multiple recurrence theorem: let $T$ be
a measure preserving transformation on a probability space
$(X,\X,\mu)$, then for every integer $d \ge 1$ and $A\in \mathcal{X}$ with positive
measure,
\begin{equation*}
    \liminf_{N\to \infty} \frac{1}{N}\sum_{n=0}^{N-1}
    \mu(A\cap T^{-n}A\cap T^{-2n}A\cap \ldots \cap T^{-dn}A)>0.
\end{equation*}
Using this result, one can prove that for a t.d.s. $(X,T)$ and any $T$-invariant Borel probability measure $\mu$
on $X$, there exists a Borel subset $X_0$ of $X$ with $\mu(X_0)=1$ such that for every $x\in X_0$, every $d\in \N$ and
every neighborhood $U$ of $x$ the set $N_{\tau_d}(x^{(d)}, U^d)$
has positive upper density \footnote{The upper density of $A$ is defined by $\overline d(A)=\limsup_{N \to \infty}\frac{
|A \cap \{-N,\cdots,-1,0,1\cdots,N-1\}|}{2N+1}.$}, for details see \cite{KLOY}.

\medskip

We have the following result related to the weak question and we conjecture that it is sharp (see the remark
following Theorem C below).

\medskip
\noindent {\bf Theorem B:} {\it Let $(X,T)$ be a minimal system and $d\in \N$. Then for any non-empty open
subset $U$ of $X$, there is some $x\in U$ such that
$N_{\tau_d}(x^{(d)}, U^d)$ is piecewise syndetic.}

\medskip
Note that there are a minimal system $(X,T)$ and some $\d>0$ such that for any open subset $U$ of $X$
with ${\rm diam}(U)<\d$, for any $x\in U$, the set $N_{\tau_d}(x^{(d)}, U^d)$ is not syndetic (Theorem \ref{not-syndetic}).

\bigskip
The class of minimal PI systems (see Section 4 for the definition) is an important subclass of the class of minimal systems. Recall that
a t.d.s. is an {\it $M$-system} if it is transitive and has a dense set of minimal points. van der Woude \cite{Wo85}
showed that a minimal t.d.s. $(X,T)$ is PI if and only if any $M$-subsystem in $(X\times X, T\times T)$ is minimal,
if and only if for any $d\ge 2$ any $M$-subsystem of $(X^d, T^{(d)})$ is minimal, where
$T^{(d)}=T\times \ldots \times T$ ($d$-times). We have

\medskip
\noindent {\bf Theorem C:} {\it Let $(X,T)$ be a PI minimal system and $d\ge 2$. Then
any $M$-subsystem of $(X^d, \tau_d)$ is minimal.}

\medskip
The reason why we conjecture Theorem B is sharp can be explained as follows: we believe that there is also a minimal
PI system without multiply minimal points. For such a system, by Theorem C, there is no $x$ such that $(x,x)$ is piecewise
syndetic recurrent, since piecewise
syndetic recurrence is equivalent to the existence of an $M$-system.

\medskip
We organize the paper as follows. In Section 2, we prove Theorem A. In Section 3, we prove Theorem B.
The proof of Theorem C is carried out in Section 4.

\medskip
\noindent {\bf Acknowledgements:} We would like to thank J. Li for bringing to our attention the paper of Furstenberg \cite{F81},
J. Qiu for a careful reading and E. Glasner for many useful comments.

\section{Multiple recurrence in horocycle flows}

In this section, we prove Theorem A. In fact, we will prove the following result:

\begin{thm}
There is some horocycle system $(X,T)$ such that for all $p \neq q\in \N$ with $(p,q)=1$ and for any $x\in X$, $(x,x)$ is a transitive but not minimal point of $(X\times X,T^p\times T^q)$.
\end{thm}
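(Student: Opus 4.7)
I would take $X=\Gamma\backslash\mathrm{PSL}(2,\R)$ with $\Gamma$ a cocompact non-arithmetic lattice chosen so that no hyperbolic element of $\Gamma$ has absolute trace of the form $(p+q)/\sqrt{pq}$ for any coprime $p\neq q$ in $\N$ (a countable set of exclusions, which a Baire-generic $\Gamma$ satisfies), and set $T=h_1$, the time-$1$ map of the horocycle flow $(h_t)$. For fixed coprime $p\neq q$, put $s=\log(q/p)$; the identity $a_sh_pa_{-s}=h_q$ in $G=\mathrm{PSL}(2,\R)$ rearranges to $h_pa_{-s}=a_{-s}h_q$. Since right translation by any element of $G$ is a well-defined self-homeomorphism of $X=\Gamma\backslash G$, the map $\phi(z)=za_{-s}$ is a homeomorphism of $X$ conjugating $h_p$ to $h_q$; its graph $M=\{(z,za_{-s}):z\in X\}$ is therefore a proper, closed, three-dimensional $(T^p\times T^q)$-invariant subset of $X\times X$ (on which the action is isomorphic to $(X,h_p)$). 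Hence $(X\times X,T^p\times T^q)$ admits a proper closed invariant subset and is \emph{not} minimal.

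For transitivity of $(x,x)$, I would apply Ratner's orbit closure theorem to the one-parameter unipotent subgroup $U=\{(h_{pt},h_{qt}):t\in\R\}$ of $G\times G$ acting on $(\Gamma\times\Gamma)\backslash(G\times G)=X\times X$: the $U$-orbit closure of $(x,x)$ is $L\cdot(x,x)$ for a closed connected Lie subgroup $L\supseteq U$ with $L\cdot(x,x)$ of finite $L$-invariant volume. Direct-product intermediate subgroups $H_1\times H_2$ between $U$ and $G\times G$ are ruled out since cocompact torsion-free $\Gamma$ contains no nontrivial unipotents or parabolics. The remaining proper intermediate candidates are graph subgroups $L_r=\{(g,d_r^{-1}gd_r):g\in G\}$, where $d_r=a_{-s}h_r$ ($r\in\R$) runs over the $1$-parameter family of elements of $G$ satisfying $d_r^{-1}h_pd_r=h_q$; for $L_r\cdot(x,x)$ to be closed of finite volume one needs $d_r\in\mathrm{Stab}(x)=g_0^{-1}\Gamma g_0$. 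But each $d_r$ has eigenvalues $e^{\pm s/2}$, so is $G$-conjugate to $a_{-s}$, and this would require $\Gamma$ to contain a hyperbolic element of absolute trace $2\cosh(s/2)=(p+q)/\sqrt{pq}$, excluded by our choice of $\Gamma$. Hence $L=G\times G$, so the $\R$-orbit closure of $(x,x)$ is all of $X\times X$. Shah's topological rigidity for cyclic unipotent subgroups then transfers this to the $\Z$-orbit under $T^p\times T^q$, so $(x,x)$ is a transitive point.

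Combining, $(x,x)$ has dense orbit in the non-minimal system $(X\times X,T^p\times T^q)$, so it is transitive but not a minimal point. The main technical obstacle is the Ratner/Shah step---specifically, the uniform-in-$r$ exclusion of the graph orbits $L_r\cdot(x,x)$ via the trace condition on $\Gamma$, made feasible by the fact that every $d_r$ has the same trace $(p+q)/\sqrt{pq}$ independent of $r$. The non-minimality side is essentially elementary once the explicit intertwiner $\phi(z)=za_{-s}$ between $(X,h_p)$ and $(X,h_q)$ is identified.
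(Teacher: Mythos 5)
Your route is genuinely different from the paper's. The paper builds an explicit arithmetic cocompact lattice $\Gamma_0=\phi(SL(1,\bbh^{2,3}_\Z))$ from a quaternion algebra, invokes the Glasner--Weiss criterion that $(g\Gamma_0,g'\Gamma_0)$ is $h\times h$-minimal iff some $g^{-1}h_sg'$ lies in ${\rm COM}(\Gamma_0)$, uses the Margulis/quaternion description of ${\rm COM}(\Gamma_0)$ to reduce minimality of $(x,x)$ under $T^p\times T^q$ to the Diophantine equation $w^2-2x^2-3y^2+6z^2=0$, which has no nonzero solutions; transitivity is then deduced from Ratner's joinings theorem (finite cover would force minimality, hence the joining is the product). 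You instead take a Baire-generic non-arithmetic cocompact lattice, obtain non-minimality of $(X\times X,T^p\times T^q)$ from the explicit intertwining graph, and get transitivity from Ratner/Shah orbit-closure rigidity, ruling out the intermediate graph subgroups via the same trace identity $(p+q)/\sqrt{pq}$. The trace computation is shared; the surrounding machinery is not. What the paper's arithmetic construction buys is an entirely explicit lattice and an elementary Diophantine argument; what your generic choice buys is avoiding quaternion algebras entirely, at the cost of a genericity statement about trace spectra in Teichm\"uller space.

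Two points in your argument need repair. First, the condition for $L_r\cdot(x,x)$ to be closed of finite volume is not $d_r\in\mathrm{Stab}(x)=g_0^{-1}\Gamma g_0$; it is that $(g_0^{-1}\Gamma g_0)\cap d_r(g_0^{-1}\Gamma g_0)d_r^{-1}$ be a lattice, i.e.\ $d_r\in{\rm COM}(g_0^{-1}\Gamma g_0)=g_0^{-1}{\rm COM}(\Gamma)g_0$. This is exactly the Glasner--Weiss condition the paper uses. The gap is fillable: by Margulis, non-arithmetic $\Gamma$ has ${\rm COM}(\Gamma)$ a lattice containing $\Gamma$ with finite index, hence still a countable trace spectrum, so your exclusion can in principle be run over ${\rm COM}(\Gamma)$ rather than $\Gamma$. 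Second, the Baire-generic claim---that one can choose $\Gamma$ so that no element of ${\rm COM}(\Gamma)$ has absolute trace in the countable set $\{(p+q)/\sqrt{pq}\}$---is asserted without argument. It is plausible (trace functions are non-constant real-analytic on Teichm\"uller space, and a generic surface has no hidden symmetries so ${\rm COM}(\Gamma)=\Gamma$), but it is a real step that needs to be carried out; it is precisely what the explicit quaternion-algebra construction in the paper replaces. With these two points addressed, the proposal gives a valid alternative proof.
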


To this aim, we need to recall some basic facts about horocycle systems.

\subsection{Horocycle flows}\
\medskip

Let $G=SL(2, \R)$, and let $m$ be the Haar measure on $G$. Let $\Gamma_0$ be a lattice of $G$. For $t,s\in \R$ let
$$h_t=
\begin{pmatrix}
1 & t \\
0 & 1
\end{pmatrix},\quad
g_s=
\begin{pmatrix}
e^{-s} & 0 \\
0 & e^s
\end{pmatrix}.
$$
Then $(X,\B(X),\mu_0, \{h_t\}_{t\in \R})$ is called the {\em horocycle flow} on $X=G/\Gamma_0$,
 where $\mu_0=\mu_{\Gamma_0}$ is the unique $G$-invariant probability measure on $X$.
 And $(X,\B(X),\mu_0,\{g_s\}_{s\in \R})$ is called the {\em geodesic flow}. It is easy to verify the following important relation between them:
\begin{equation}\label{s1}
  g_sh_tg^{-1}_s=h_{e^{-2s}t}, \ \forall t,s\in \R.
\end{equation}
If we denote $h=h_1$, then it is well known that $(X,\B(X),\mu_0,h)$ is a strictly ergodic system \cite{F73}.

\medskip

Let $\G_c$ be the family of all co-compact discrete subgroups of $G$. For $\Gamma\in \G_c$ denote
$$X_\Gamma=G/\Gamma, \ \mu_\Gamma=\text{the $G$-invariant probability measure on $X_\Gamma$ }.$$

Recall that $\Gamma$ and $\Gamma_0$ are {\em commensurable} if $\Gamma\cap \Gamma_0$ has finite index in both $\Gamma$ and $\Gamma_0$.
Let
$${\rm COM}(\Gamma)=\{\a\in G: \Gamma^\a\triangleq\a\Gamma \a^{-1} \ \text{is commensurable with $\Gamma$}\}.$$
By Margulis theorem \cite[Page 125]{Zimmer},
$\Gamma$ is arithmetic if and only if ${\rm COM}(\Gamma)$ is dense in $G$.

\subsection{Arithmetic subgroups of $SL(2,\R)$ via quaternion algebras}\
\medskip

In this subsection we introduce arithmetic subgroups of $SL(2,\R)$ via quaternion algebras. Refer to \cite[Section 6.2]{Morris} for more details.

\begin{de}
For any field $F$, and any nonzero $a, b \in  F$, the corresponding
\textit{quaternion algebra} over $F$ is the ring
\begin{equation}\label{}
  \bbh_F^{a,b}=\{x_0+x_1{i}+x_2{j} +x_3{ k}: x_0,x_1,x_2,x_3\in F\},
\end{equation}
where addition is defined in the obvious way, and multiplication is determined by the relations
$$i^2=a,\  j^2=b, \ ij=k=-ji.$$
Note that
$$k^2=(-ji)(ij)=-aj^2=-ab.$$

The \textit{reduced norm} of $\a =x_0+x_1i+x_2j+x_3 k\in \bbh^{a,b}_F$ is
\begin{equation}\label{}
 {\rm  N_{red}}(\a)=\a\overline{\a}=x_0^2-ax_1^2-bx_2^2+abx_3^2 \in F,
\end{equation}
where
$$\overline{\a}=x_0-x_1i-x_2j-x_3k$$
is the \textit{conjugate} of $\a$. Note that $\overline{\a \b}=\overline{\b}\overline{\a}$.
\end{de}

\begin{rem}
\begin{enumerate}
  \item $\bbh^{-1,-1}_\R=\bbh$ is the classical quaternion algebra.
  \item $\bbh^{t^2a,t^2b}_F\cong \bbh^{a,b}_F$ for any nonzero $a,b,t\in F$. An isomorphism is given by $1\mapsto 1, i\mapsto ti, j\mapsto tj, k\mapsto t^2k$.

      \item ${\rm N_{red}}(\a\b)={\rm N_{red}}(\a)\cdot {\rm N_{red}}(\b)$, for $\a, \b \in \bbh^{a,b}_F$.
\end{enumerate}
\end{rem}

\begin{rem}\label{rem1}
We have for any nonzero $a,b\in F$,
$$\bbh^{a^2,b}_F\cong {\rm Mat}_{2\times 2}(F).$$

Define an isomorphism $\phi: \bbh^{a^2,b}_F\rightarrow  {\rm Mat}_{2\times 2}(F)$ by
\begin{equation}\label{}
  \phi(1)=\begin{bmatrix}
1 & 0 \\
0 & 1
\end{bmatrix}, \ \phi(i)=
  \begin{bmatrix}
a & 0 \\
0 & -a
\end{bmatrix}, \ \phi(j)=
\begin{bmatrix}
0 & 1 \\
b & 0
\end{bmatrix},\  \phi(k)=
\begin{bmatrix}
0 & a \\
-ba & 0
\end{bmatrix}.
\end{equation}
Hence for $\a =x_0+x_1i+x_2j+x_3 k$
$$\phi(\a)=\phi(x_0+x_1i+x_2j+x_3k)=\begin{bmatrix}
x_0+x_1a & x_2+x_3a \\
b(x_2-x_3a) & x_0-x_1 a
\end{bmatrix}.$$
And
$$\det(\phi(\a))=x_0^2-a^2x_1^2-bx_2^2+a^2bx_3^2={\rm N_{red}}(\a)$$
\end{rem}

\begin{thm}\cite[Proposition 6.2.4.]{Morris}\label{thm-Morris}
Fix positive integers $a$ and $b$, and let
$$SL(1,\bbh^{a,b}_\R)=\{g\in \bbh^{a,b}_\R: {\rm N_{red}}(g)=1\}.$$
Then
\begin{enumerate}
  \item $SL(1,\bbh^{a,b}_\R) \cong SL(2,\R)$.
  \item $SL(1,\bbh^{a,b}_\Z)=\{g\in \bbh^{a,b}_\Z: {\rm N_{red}}(g)=1\}$ is an arithmetic subgroup of $SL(1,\bbh^{a,b}_\R)$.
  \item the following are equivalent:
\begin{enumerate}
  \item $SL(1,\bbh^{a,b}_\Z)$ is co-compact in $SL(1,\bbh^{a,b}_\R)$.
  \item $(0,0,0,0)$ is the only integer solution $(x_0,x_1,x_2,x_3)$ of the Diophantine equation
\begin{equation}\label{h6}
  w^2-ax^2-by^2+abz^2=0
\end{equation}
  \item Every nonzero element of $\bbh^{a,b}_\Q$ has a multiplicative inverse.
\end{enumerate}
\end{enumerate}
\end{thm}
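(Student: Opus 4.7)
The plan is to handle the three statements in order, drawing on the concrete matrix model supplied by Remark \ref{rem1} and on standard criteria from the theory of arithmetic groups.

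For part (1), I would apply Remark \ref{rem1} with $F=\R$: since $a>0$, $\sqrt{a}\in \R$, so writing $a=(\sqrt{a})^2$ identifies $\bbh^{a,b}_\R$ with $\bbh^{(\sqrt{a})^2,b}_\R$, and Remark \ref{rem1} then produces an explicit isomorphism $\phi:\bbh^{a,b}_\R\to \mathrm{Mat}_{2\times 2}(\R)$ under which $\mathrm{N_{red}}$ corresponds to $\det$. Restricting to reduced-norm-one elements yields $SL(1,\bbh^{a,b}_\R)\cong SL(2,\R)$. Part (2) is then essentially tautological: $\bbh^{a,b}$ is cut out by polynomial equations over $\Q$ and $\mathrm{N_{red}}$ is a $\Q$-polynomial in the coordinates, so $SL(1,\bbh^{a,b})$ is a $\Q$-algebraic group and its subgroup $SL(1,\bbh^{a,b}_\Z)$ of integer points is by definition an arithmetic subgroup of $SL(1,\bbh^{a,b}_\R)$.

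For part (3) I would prove (b)$\Leftrightarrow$(c) first and then (a)$\Leftrightarrow$(c). The equivalence (b)$\Leftrightarrow$(c) is elementary: any $\alpha\in \bbh^{a,b}_\Q$ satisfies $\alpha\overline{\alpha}=\mathrm{N_{red}}(\alpha)$, so $\alpha$ has a multiplicative inverse in the algebra exactly when $\mathrm{N_{red}}(\alpha)\neq 0$; clearing denominators, this fails for some nonzero $\alpha\in\bbh^{a,b}_\Q$ iff equation (\ref{h6}) admits a nonzero integer solution. The implication (a)$\Rightarrow$(c) is also not hard: if (c) fails, then $\bbh^{a,b}_\Q$ is not a division algebra, so by Artin--Wedderburn it is isomorphic to $\mathrm{Mat}_{2\times 2}(\Q)$; hence $SL(1,\bbh^{a,b}_\Z)$ is commensurable with $SL(2,\Z)$ inside $SL(2,\R)$, and since $SL(2,\R)/SL(2,\Z)$ is well known to have a cusp, (a) fails.

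The main obstacle is the reverse direction (c)$\Rightarrow$(a): one must show that if $\bbh^{a,b}_\Q$ is a division algebra, then $SL(1,\bbh^{a,b}_\R)/SL(1,\bbh^{a,b}_\Z)$ is compact. The standard route is Godement's (or Mahler's) compactness criterion: a sequence of cosets diverging to infinity would, after passing to a subsequence and applying the reduction theory for the quadratic form $\mathrm{N_{red}}$, produce a nonzero rational vector on which $\mathrm{N_{red}}$ vanishes, contradicting anisotropy of $\mathrm{N_{red}}$ over $\Q$. Turning this heuristic into a rigorous proof requires the reduction theory of quadratic forms (or, more generally, of semisimple $\Q$-groups), and this is where any self-contained treatment has to do the real work; the remaining parts of the theorem are either formal or reduce directly to the explicit matrix realization given by Remark \ref{rem1}.
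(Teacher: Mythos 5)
The paper does not prove this statement: it is quoted verbatim from \cite[Proposition 6.2.4]{Morris} and used as a black box, so there is no in-paper proof to compare against. Your sketch follows the standard route that Morris himself takes. Parts (1) and (2) are handled exactly as in the source: since $a>0$ is a square in $\R$, Remark \ref{rem1} (with $a$ replaced by $\sqrt a$) splits $\bbh^{a,b}_\R$ as $\mathrm{Mat}_{2\times 2}(\R)$ with $\mathrm{N_{red}}$ matching $\det$, and the integrality of the defining equations makes $SL(1,\bbh^{a,b}_\Z)$ arithmetic by definition. The equivalence (b)$\Leftrightarrow$(c) via $\alpha\overline\alpha=\mathrm{N_{red}}(\alpha)$ and homogeneity is exactly right, and the observation that a non-division quaternion $\Q$-algebra must be $\mathrm{Mat}_{2\times 2}(\Q)$ (a $4$-dimensional CSA is either division or split), hence commensurable with the non-cocompact $SL(2,\Z)$, correctly gives (a)$\Rightarrow$(c).

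Where you are honest but not complete is (c)$\Rightarrow$(a). Your appeal to ``Mahler/Godement plus reduction theory for the norm form'' is the right idea, but as written it is a heuristic, not an argument; a cleaner statement of what is actually being used is the Godement compactness criterion: for a semisimple $\Q$-group $G$, $G(\R)/G(\Z)$ is compact iff $G$ is $\Q$-anisotropic, iff $G(\Q)$ has no nontrivial unipotent elements. When $\bbh^{a,b}_\Q$ is a division algebra, a unipotent $u\ne 1$ in $SL(1,\bbh^{a,b}_\Q)$ would give a nonzero nilpotent $u-1$, impossible in a division ring, so $G$ is anisotropic and cocompactness follows. Stating it this way isolates the one nonelementary input (Godement's criterion) and avoids having to redo reduction theory by hand, which is essentially how Morris closes the argument as well.
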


\begin{rem}
By Remark \ref{rem1}, for positive integers $a$ and $b$ the isomorphism in Theorem \ref{thm-Morris}-(1) is given by $\phi: \bbh^{a,b}_\R\rightarrow  {\rm Mat}_{2\times 2}(\R)$ with
\begin{equation}\label{s2}
  \phi(1)=\begin{bmatrix}
1 & 0 \\
0 & 1
\end{bmatrix}, \ \phi(i)=
  \begin{bmatrix}
\sqrt{a }& 0 \\
0 & -\sqrt{a}
\end{bmatrix},\ \phi(j)=
\begin{bmatrix}
0 & 1 \\
b & 0
\end{bmatrix}, \ \phi(k)=
\begin{bmatrix}
0 & \sqrt{a} \\
-b\sqrt{a} & 0
\end{bmatrix}.
\end{equation}
For  $\a=x_0+x_1i+x_2j+x_3k\in \bbh^{a,b}_\R$, we have
\begin{equation}\label{s3}
  \phi(\a)=\begin{bmatrix}
x_0+x_1\sqrt{a} & x_2+x_3\sqrt{a} \\
b(x_2-x_3\sqrt{a}) & x_0-x_1\sqrt{a}
\end{bmatrix}.
\end{equation}
\begin{equation}
  \det(\phi(\a))= x_0^2-ax_1^2-bx_2^2+abx_3^2={\rm N_{red}}(\a).
\end{equation}
\begin{equation}\label{s4}
  {\rm Trace}(\a) \triangleq {\rm Trace}(\phi(\a))=2x_0.
\end{equation}

Thus
$$G=SL(2,\R)=\phi(SL(1,\bbh^{a,b}_\R)).$$

\end{rem}

\begin{rem}
Every cocompact, arithmetic subgroup of $SL(2,\R)$
appears in Theorem \ref{thm-Morris} up to commensurability and
conjugates \cite[Proposition 6.2.6 ]{Morris}.
\end{rem}

\subsection{A Lemma on the commensurator subgroup}\
\medskip

Let $A(\Q)=\bbh^{a,b}_\Q$ and $A(\Z)=\bbh^{a,b}_\Z$, where $a,b\in \N$.
Let $A_1(\Z)$ be the integral unit group:
\begin{equation*}\label{}
 A_1(\Z)=SL(1,\bbh^{a,b}_\Z)=\{\a\in A(\Z)=\bbh^{a,b}_\Z: {\rm N_{red}}(\a)=1\}.
\end{equation*}
We assume that the condition of Theorem \ref{thm-Morris}-(3) holds. Then
$\Gamma_0=\phi(A_1(\Z))$ is co-compact lattice in $SL(1,\bbh^{a,b}_\R)$. Let $\Gamma$
be commensurable with $\Gamma_0$. Then the commensurator of $\Gamma$ consists of the $\Q$ points. That is,

\begin{thm}\cite{BSZ, PR}\label{thm-COM}
\begin{equation}\label{}
{\rm COM}(\Gamma)=\left\{ \frac{\phi(\a)}{(\det \a)^{\frac 12}}: \a\in A^+(\Q)\right\},
\end{equation}
where $A^+(\Q)=\{\a\in A(\Q): {\rm N_{red}} (\a)>0.\}$
\end{thm}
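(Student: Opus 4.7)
The plan is to verify the two inclusions separately. Since $\Gamma$ is commensurable with $\Gamma_0$, one has ${\rm COM}(\Gamma)={\rm COM}(\Gamma_0)$, so throughout I would replace $\Gamma$ by $\Gamma_0=\phi(A_1(\Z))$. I would handle the easy inclusion $\supseteq$ by a direct congruence-subgroup argument, and then invoke a deep rigidity theorem for the converse.

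For the inclusion $\supseteq$: I would fix $\a\in A^+(\Q)$ and set $g_\a=\phi(\a)/\sqrt{{\rm N_{red}}(\a)}$; since $\det\phi(\a)={\rm N_{red}}(\a)>0$, this element is a well-defined element of $SL(2,\R)$. The positive real scalar factor is central, so conjugation by $g_\a$ coincides with conjugation by $\phi(\a)$, whence $g_\a\Gamma_0 g_\a^{-1}=\phi(\a A_1(\Z)\a^{-1})$. To show that this is commensurable with $\Gamma_0$, I would observe that $A(\Z)$ and $\a A(\Z)\a^{-1}$ are both $\Z$-orders in the finite-dimensional $\Q$-algebra $A(\Q)$, and that two such orders are always commensurable. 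Concretely, choosing $N\in\N$ with $N\a,N\a^{-1}\in A(\Z)$, the identity
$$\a(1+N^2\gamma)\a^{-1}=1+(N\a)\gamma(N\a^{-1})$$
shows that the level-$N^2$ principal congruence subgroup of $A_1(\Z)$ is contained in $A_1(\Z)\cap \a A_1(\Z)\a^{-1}$, giving the required finite-index relation.

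For the converse inclusion $\subseteq$, which is the main obstacle: given $g\in{\rm COM}(\Gamma_0)$, the intersection $\Gamma_0\cap g\Gamma_0 g^{-1}$ has finite index in each factor and, by Borel density, is Zariski dense in the $\Q$-algebraic group $\underline{G}=SL(1,\bbh^{a,b})$ realized inside $SL_2/\R$ via $\phi$. Conjugation by $g$ therefore induces a $\Q$-algebraic automorphism of $\underline{G}$, which must be inner since $SL_2$ has no nontrivial outer algebraic automorphisms; hence it is given by conjugation by some element of $\underline{G}(\Q)$. Identifying $\underline{G}(\Q)$ with $\phi(A_1(\Q))$ and correcting by a positive real scalar so that the resulting element lies in $SL(2,\R)$, one recovers $g=\phi(\a)/\sqrt{{\rm N_{red}}(\a)}$ for some $\a\in A^+(\Q)$ (the positivity of ${\rm N_{red}}(\a)$ being forced so that the correcting scalar is real). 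This is essentially Margulis' commensurator theorem specialized to this arithmetic setting, and relies on the rigidity of $\Q$-structures on semisimple real Lie groups; I would simply invoke the results of \cite{BSZ, PR} rather than attempt to reprove them here.
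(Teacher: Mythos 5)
The paper itself offers no proof of this theorem: it is stated with citations to \cite{BSZ, PR} only, so there is no in-paper argument to compare against. Your outline is therefore extra content, and it is broadly the right shape. The inclusion $\supseteq$ is handled correctly by the order/congruence-subgroup argument; one small point is that the identity $\a(1+N^2\gamma)\a^{-1}=1+(N\a)\gamma(N\a^{-1})$ directly gives $\a\,\Gamma(N^2)\,\a^{-1}\subset A_1(\Z)$, i.e.\ $\Gamma(N^2)\subset A_1(\Z)\cap\a^{-1}A_1(\Z)\a$, so you should run the argument for both $\a$ and $\a^{-1}$ (the hypotheses $N\a,N\a^{-1}\in A(\Z)$ are symmetric) to get finite index on both sides of $A_1(\Z)\cap\a A_1(\Z)\a^{-1}$.

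In the converse inclusion $\subseteq$, there is a genuine slip in which group you conjugate by. An inner $\Q$-automorphism of $\underline G=SL(1,\bbh^{a,b})$ is conjugation by an element of the adjoint group $\underline G^{\mathrm{ad}}(\Q)$, which by Skolem--Noether is $A(\Q)^\times/\Q^\times$; it is \emph{not} in general conjugation by an element of $\underline G(\Q)=A_1(\Q)$. The natural map $A_1(\Q)\to A(\Q)^\times/\Q^\times$ is far from surjective: a class $[\a]$ lies in its image precisely when ${\rm N_{red}}(\a)\in(\Q^\times)^2$, so the cokernel is detected by the reduced norm in $\Q^\times_{>0}/(\Q^\times)^2$, which is nontrivial (e.g.\ for $a=2$, $\a=2+i$ has ${\rm N_{red}}(\a)=2$). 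If the conjugating element really came from $A_1(\Q)$ you would have ${\rm N_{red}}(\a)=1$, no scalar correction would be needed, and the set you recover would be strictly smaller than $A^+(\Q)$ --- contradicting the formula you then write. The correct version takes an arbitrary $\a\in A(\Q)^\times$ lifting the class in $\underline G^{\mathrm{ad}}(\Q)$, so that $g=c\,\phi(\a)$ for some $c\in\R^\times$; the condition $\det g=1$ forces $c^2\,{\rm N_{red}}(\a)=1$, hence ${\rm N_{red}}(\a)>0$ (giving exactly $A^+(\Q)$) and $c=\pm{\rm N_{red}}(\a)^{-1/2}$, with the sign absorbable into $\a$. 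With that correction, and the appeal to \cite{BSZ,PR} for the rigidity input that makes conjugation by $g$ a $\Q$-automorphism of $\underline G$, your sketch is a reasonable account of why the theorem holds.
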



\subsection{Minimal points of product systems}\
\medskip

Let $a=2, b=3$, i.e. we consider $\bbh^{2,3}_\R$. Now
$$i=\sqrt{2},\  j=\sqrt{3},\ k=ij=\sqrt{6}.$$

The following lemma can be gotten easily from the Hasse-Minkowski theory \cite[Chapter 1]{BS}. For completeness, we include a proof.

\begin{lem}\label{lem-Dioph}
$(0,0,0,0)$ is the only integer solution $(x_0,x_1,x_2,x_3)$ of the Diophantine equation
\begin{equation}\label{s5}
  w^2-2x^2-3y^2+6z^2=0.
\end{equation}
\end{lem}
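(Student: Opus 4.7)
The plan is to prove the Diophantine equation has only the trivial integer solution by reduction modulo $3$, via an infinite-descent-style argument. Suppose for contradiction that $(w,x,y,z)$ is a nonzero integer solution. After dividing through by $d=\gcd(w,x,y,z)$ (which is allowed since the equation is homogeneous of degree $2$), I may assume the solution is primitive, i.e.\ $\gcd(w,x,y,z)=1$.

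Next I reduce mod $3$. The terms $3y^2$ and $6z^2$ vanish, so the equation forces
\begin{equation*}
w^2 \equiv 2 x^2 \pmod{3}.
\end{equation*}
Since squares modulo $3$ are only $0$ or $1$, the congruence $w^2 \equiv 2 x^2 \pmod 3$ forces both $w$ and $x$ to be divisible by $3$ (otherwise one would get $w^2 \equiv 2 \pmod 3$, which is impossible). Writing $w=3w_1$, $x=3x_1$ and substituting into \eqref{s5}, I obtain $9w_1^2 - 18 x_1^2 - 3y^2 + 6z^2 = 0$; dividing by $3$ yields
\begin{equation*}
y^2 - 2 z^2 \equiv 0 \pmod 3,
\end{equation*}
and repeating the same square-residue argument gives $3\mid y$ and $3\mid z$.

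This contradicts the assumption that $\gcd(w,x,y,z)=1$, so the only integer solution is $(0,0,0,0)$. The only subtlety is checking that the residue analysis mod $3$ really forces all four coordinates to be divisible by $3$; once one notices that the coefficient $2$ of $x^2$ (and of $z^2$ after dividing) is a nonsquare mod $3$, the rest is routine. No Hasse–Minkowski machinery is actually needed, even though the lemma is stated as being a consequence of that general theory.
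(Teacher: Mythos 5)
Your proof is correct and uses the same idea as the paper: reduce mod $3$, observe that $w^2\equiv 2x^2\pmod 3$ forces $3\mid w$ and $3\mid x$, substitute, and repeat for $y,z$. The only cosmetic difference is that you normalize to a primitive solution ($\gcd=1$) and derive a contradiction, whereas the paper picks a solution minimizing $|w|+|x|+|y|+|z|$ and produces a strictly smaller one; these are two standard, equivalent ways of running the same infinite descent.
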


\begin{proof}
If $(0,0,0,0)$ is not the only integer solution, then let $(w_0, x_0,y_0,z_0)\in \Z^4\setminus \{(0,0,0,0)\}$ be a solution such that
\begin{equation*}
  \begin{split}
     &\quad |w_0|+|x_0|+|y_0|+|w_0|\\
     &=\min \{|w|+|x|+|y|+|w|:  w^2-2x^2-3y^2+6z^2=0, (w,x,y,z)\in \Z^4\setminus \{(0,0,0,0)\}\}.
   \end{split}
\end{equation*}
Since $w_0^2-2x_0^2-3y_0^2+6z_0^2=0$, we have
$w^2_0-2x_0^2 \equiv 0 \pmod{3}$. But $w_0^2 \equiv 0 \ \text{or }\ 1 \pmod{3}$ and  $2x_0^2 \equiv 0 \ \text{or }\ 2 \pmod{3}$. It follows that  $w_0 \equiv 0  \pmod{3}$ and $x_0 \equiv 0  \pmod{3}$. Let $w_0=3w_1$ and $x_0=3x_1$. Then we have
$$y_0^2-2z_0^2-3w_1^2+6x_1^2=0.$$
Similarly, we have  $y_0 \equiv 0  \pmod{3}$ and $z_0 \equiv 0  \pmod{3}$. Let $y_0=3y_1$ and $z_0=3z_1$.
Then $$w_1^2-2x_1^2-3y_1^2+6z_1^2=0.$$
But $|w_1|+|x_2|+|y_1|+|w_1|<|w_0|+|x_0|+|y_0|+|w_0|$, a contradiction!
\end{proof}

By Theorem \ref{thm-Morris} and Lemma \ref{lem-Dioph}, the group
$\Gamma_0=\phi(SL(1,\bbh^{2,3}_\Z))$ is co-compact.

\begin{equation*}\label{}
\begin{split}
  \Gamma_0& =\left\{\begin{pmatrix}
x_0+x_1 i & x_2+x_3 i \\
3(x_2-x_3i) & x_0-x_1i
\end{pmatrix}: x_0,x_1,x_2,x_3\in \Z, \ x_0^2-2x_1^2-3x_2^2+6x_3^2=1 \right\}\\
&= \left\{\begin{pmatrix}
x_0+x_1\sqrt{2} & x_2+x_3\sqrt{2} \\
3(x_2-x_3\sqrt{2}) & x_0-x_1\sqrt{2}
\end{pmatrix}: x_0,x_1,x_2,x_3\in \Z, \ x_0^2-2x_1^2-3x_2^2+6x_3^2=1 \right\}.
\end{split}
\end{equation*}

Recall that for $\a\in G,$ and $\Gamma$,
$$\Gamma^{\a}=\a\Gamma \a^{-1}.$$


The following result gives a condition when a point in the product system can be minimal. Recall that $h=h_1$, and $\G_c$ is the family of all co-compact discrete subgroups of $G$.

\begin{thm}[Glasner-Weiss]\cite{GW94}\label{thm-GW}
Let $\Gamma,\Gamma'\in \G_c$. If $(g\Gamma,g'\Gamma')$ is a minimal point of $(X_\Gamma\times X_{\Gamma'}, h\times h)$ then either $(X_\Gamma\times X_{\Gamma'}, h\times h)$ is minimal or for some $s\in \R$, $\a=g^{-1}h_s g'$ is such that ${\Gamma'}^\a$ and $\Gamma$ are commensurable.

In particular, for $\Gamma=\Gamma'=\Gamma_0$,  a point  $(g\Gamma_0, g'\Gamma_0)$ is minimal if and only if $$\a=g^{-1}h_s g'\in {\rm COM}(\Gamma_0)$$ for some $s\in \R$.
\end{thm}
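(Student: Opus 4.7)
The plan is to invoke Ratner's orbit closure theorem for unipotent actions on homogeneous spaces, which for $SL(2,\R)$ specializes to the Dani--Smillie classification of minimal sets for horocycle flows. First I would recast the action: the $h \times h$-orbit of $(g\Gamma, g'\Gamma')$ in $X_\Gamma \times X_{\Gamma'}$ coincides with the orbit of $(g,g')(\Gamma \times \Gamma')$ in $(G \times G)/(\Gamma \times \Gamma')$ under the diagonally embedded one-parameter unipotent subgroup $U = \{(h_t, h_t) : t \in \R\}$. Since the minimal sets of the time-one map $h$ of a horocycle flow coincide with those of the $\R$-flow (a classical fact from Dani), Ratner's theorem supplies a closed connected subgroup $H$ of $G \times G$ containing $U$ such that the orbit closure equals $H \cdot (g,g')(\Gamma \times \Gamma')$ and $(g,g')^{-1} H (g,g') \cap (\Gamma \times \Gamma')$ is a lattice in $(g,g')^{-1} H (g,g')$.

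Next I would classify the possible subgroups $H$. Because $G = SL(2,\R)$ is simple, any closed connected subgroup of $G \times G$ that contains $U$ and is generated by its unipotents is either $G \times G$ itself or a graph $\{(x, \beta x \beta^{-1}) : x \in G\}$ of an inner automorphism. Such a graph contains $U$ iff $\beta$ centralizes $\{h_t\}_{t \in \R}$ in $G$, and a direct calculation shows this centralizer is $\{\pm h_s : s \in \R\}$. Hence either $H = G \times G$, giving the first alternative that $(X_\Gamma \times X_{\Gamma'}, h \times h)$ is minimal, or $H = \{(x, h_s x h_{-s}) : x \in G\}$ for some $s \in \R$.

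In the remaining case, set $\a = g^{-1} h_s g'$. A direct computation yields $(g,g')^{-1} H (g,g') = \{(x, \a^{-1} x \a) : x \in G\}$, whose intersection with $\Gamma \times \Gamma'$ is the graph of $\gamma \mapsto \a^{-1}\gamma\a$ restricted to $\Gamma \cap \a \Gamma' \a^{-1}$. The lattice condition then amounts to $\Gamma \cap {\Gamma'}^\a$ having finite index in both $\Gamma$ and in ${\Gamma'}^\a$, which is precisely the definition of commensurability. For the ``in particular'' clause with $\Gamma = \Gamma' = \Gamma_0$, the forward direction is then immediate from the definition of ${\rm COM}(\Gamma_0)$; the converse is obtained by reversing the construction: given $\a \in {\rm COM}(\Gamma_0)$ with $\a = g^{-1} h_s g'$, one checks directly that the orbit of $H$ based at $(g,g')(\Gamma_0 \times \Gamma_0)$ is closed with finite $H$-invariant volume, hence minimal for $U$, so $(g\Gamma_0, g'\Gamma_0)$ is a minimal point.

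The main obstacle is the classification step together with the verification that time-one minimal sets match flow minimal sets; once these are in hand, the commensurability condition is read off the lattice condition by routine computation, and the ``in particular'' equivalence for $\Gamma_0$ drops out of the definition of the commensurator.
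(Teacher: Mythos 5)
The paper does not prove Theorem~\ref{thm-GW}: it is quoted from Glasner--Weiss \cite{GW94} and used as a black box, so there is no in-paper argument to compare against. Your sketch is a plausible reconstruction along the lines one would expect: pass from the time-one map to the diagonal $\R$-action of $U=\{(h_t,h_t)\}$ on $(G\times G)/(\Gamma\times\Gamma')$, use Dani's observation that the horocycle flow and its time-one map share the same minimal sets, apply Ratner's orbit closure theorem to write the closure as $Hx$ for a closed connected $H\supseteq U$ whose conjugate $(g,g')^{-1}H(g,g')$ meets $\Gamma\times\Gamma'$ in a lattice, classify $H$, and read off commensurability from the lattice condition. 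Two details deserve care if you were to write this out fully. First, your classification asserts that $H$ is either $G\times G$ or the graph of an \emph{inner} automorphism, but $SL(2,\R)$ admits an outer automorphism (conjugation by a determinant $-1$ matrix), and its graph is also a connected closed subgroup of $G\times G$ with both projections onto $G$; you must explicitly rule it out. It is in fact excluded because $(h_t,h_t)\in\{(x,\gamma x\gamma^{-1})\}$ forces $\gamma$ to be upper triangular with equal diagonal entries, hence $\det\gamma=a^2>0$, so no determinant $-1$ representative occurs. You must also explain why $H$ cannot be a lower-dimensional group such as $U$ itself or a Borel; here one uses that the $U$-flow on $Hx$ must be minimal (since $(g\Gamma,g'\Gamma')$ is a minimal point), and compact quotients of solvable or unipotent $H$ by lattices always carry proper $U$-invariant closed subsets, whereas for $H\cong SL(2,\R)$ with a co-compact lattice Hedlund's theorem gives minimality. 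Second, for the ``in particular'' clause with $\Gamma=\Gamma'=\Gamma_0$ you should note that the first alternative of the main statement never occurs: the diagonal $\{(x\Gamma_0,x\Gamma_0)\}$ is a proper closed $(h\times h)$-invariant set, so $(X_{\Gamma_0}\times X_{\Gamma_0},h\times h)$ is never minimal, and hence minimality of the point directly yields $\alpha\in{\rm COM}(\Gamma_0)$; together with your Hedlund-type converse this produces the claimed equivalence. Subject to these clarifications the approach is sound and matches the Ratner-theoretic technology that the Glasner--Weiss result rests on.
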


The notion of joining was introduced by Furstenberg in \cite{F67}.
Let $(X_i, \mu_i,T_i), i = 1,\ldots, k$, be measure preserving systems and let $(Y_i,\nu_i,S_i)$ be corresponding factors, and
$\pi_i:X_i\rightarrow Y_i$ the factor maps. A measure $\nu$ on
$Y=\prod_i Y_i$ defines a {\em joining} of the measures on $Y_i$ if
it is invariant under $S_1\times \ldots \times S_k$ and maps onto
$\nu_j$ under the natural map $\prod_i Y_i\rightarrow Y_j$. When $S_1=\ldots=S_k$, we then say that
$\nu$ is a $k$-fold selfjoining, and just selfjoining when $k=2$.

\begin{de}\cite[1.5.29]{Morris2005}
Let $\Gamma$ be a lattice in $G=SL(2,\R)$.
For some $g\in G$, let $\Gamma'=\Gamma\cap (g\Gamma g^{-1})$ and assume that $\Gamma'$ has finite index in $\Gamma$. There are two natural covering maps from $X'=G/\Gamma'$ to $X=G/\Gamma$:
$\Psi_1(x\Gamma')=x\Gamma$ and $\Psi_2(x\Gamma')=xg\Gamma$. Define
$$\Psi: X'\rightarrow X\times X, \ \Psi(x)=\big(\Psi_1(x),\Psi_2(x)\big).$$
Let $\mu_{\Gamma'}$ be the $G$-invariant probability measure on $X'$.
The measure $\hat{\mu}=\Psi_*\mu_{\Gamma'}$ is a self-joining of $X$, and is called a \textbf{finite-cover self-joining}.
\end{de}

\begin{thm}[Ratner Joinings Theorem]\cite{R83},\cite[1.5.30]{Morris2005}\label{thm-Ratner-Joining}
Any ergodic selfjoining
of a horocycle flow must be either

(1) a finite cover, or

(2) the product self-joining.
\end{thm}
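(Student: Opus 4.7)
The plan is to follow Ratner's measure rigidity strategy for the horocycle flow, adapted to a self-joining. An ergodic self-joining $\widehat{\mu}$ on $X\times X$, where $X=G/\Gamma$, is a probability measure invariant under the diagonal $h\times h$, with marginals equal to $\mu_\Gamma$. The aim is first to exhibit additional invariance of $\widehat{\mu}$ beyond the diagonal horocycle, and then to classify the resulting stabilizer subgroup of $G\times G$.

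\textbf{Step 1 (polynomial shearing / $H$-property).} The core technique is Ratner's polynomial divergence of nearby unipotent orbits. I would take two pairs $(x,y)$ and $(x',y')=(v_1 x, v_2 y)$ in $\operatorname{supp}(\widehat{\mu})$ with $(v_1,v_2)$ close to the identity but not centralizing the diagonal $\{(h_t,h_t)\}$. Since $h_t v h_{-t}$ evolves polynomially in $t$ for $v$ near the identity in $G$, the relative displacement under $h_t\times h_t$ drifts polynomially along a one-parameter subgroup of $G\times G$ transverse to the diagonal. A Lusin-type argument produces an abundance of such generic pairs; averaging along a time window of length calibrated so that the shearing is of order one, then extracting a weak-$*$ limit, yields a new measure-preserving transformation of $(X\times X,\widehat{\mu})$. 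Combined with Birkhoff averages and the ergodicity of $\widehat{\mu}$ under $h\times h$, this upgrades to honest invariance of $\widehat{\mu}$ under some one-parameter subgroup $\{(a_s,b_s)\}\leq G\times G$ not contained in $\{(h_t,h_t)\}$.

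\textbf{Step 2 (classification of the stabilizer).} Let $H\leq G\times G$ be the closed subgroup of all symmetries of $\widehat{\mu}$. Analyze closed connected subgroups of $SL(2,\R)\times SL(2,\R)$ strictly containing the diagonal horocycle. Using the low-dimensional structure of $G$, any such $H$ is either (i) of \emph{graph type}, namely the conjugate of the diagonal $\Delta G$ by some $(e,g)$ — forcing $\widehat{\mu}$ to be supported on the graph-like image of $G/\Gamma'$ in $X\times X$ for $\Gamma'=\Gamma\cap g\Gamma g^{-1}$, which gives a finite-cover joining — or (ii) all of $G\times G$, in which case Mautner's phenomenon together with uniqueness of the Haar measure on $X\times X$ forces $\widehat{\mu}=\mu_\Gamma\times\mu_\Gamma$, the product joining. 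The commensurability appearing in the finite-cover definition falls out from the requirement that the graph projects with finite multiplicity onto each factor, i.e.\ that $\Gamma'$ has finite index in $\Gamma$.

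The hard part is Step 1: making the shearing argument rigorous so that it produces genuine extra invariance rather than mere drift. One needs precise quantitative control of the polynomial rate of divergence, a careful density-point argument to secure sufficiently many generic pairs with prescribed relative position, and the subtle choice of averaging window so that the sheared displacement lands, in the limit, inside a compact one-parameter subgroup generating new invariance. This technical core relies crucially on $\{h_t\}$ being \emph{unipotent} — the polynomial rather than exponential growth of $h_t v h_{-t}$ is what lets the averaging survive the passage to a limit — and it is precisely this step that fails for non-unipotent flows.
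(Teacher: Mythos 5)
The paper does not prove this theorem; it is imported verbatim from Ratner's 1983 paper and Morris's exposition, so there is nothing in the text to compare your argument against. What you have written is a high-level pr\'ecis of Ratner's own strategy, and the architecture (polynomial shearing $\Rightarrow$ extra invariance $\Rightarrow$ classification of the stabilizer) is the correct one. As a proof, however, it has real gaps beyond the ones you flag.

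Step~1 is not merely ``the hard part''; it \emph{is} the theorem. Producing honest extra invariance from the $H$-property is the content of a substantial portion of Ratner's paper, and the one-line allusion to ``Lusin-type argument $+$ weak-$*$ limit'' hides a carefully calibrated choice of scales, the proof that the drift direction is a fixed one-parameter subgroup rather than wandering, and the argument that the limiting translate is measure-preserving. Nothing in your sketch distinguishes the unipotent case from, say, a partially hyperbolic one except the remark that the conjugation grows polynomially; you would need to show precisely how that polynomial control survives the averaging.

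Step~2 contains concrete inaccuracies. Since $\widehat\mu$ is a joining of the flow, its stabilizer $H\le G\times G$ contains the full diagonal one-parameter group $\{(h_t,h_t):t\in\R\}$, not just the time-one map. A conjugate $(e,g)\Delta G(e,g)^{-1}$ contains $\{(h_t,h_t)\}$ only when $g$ centralizes $N=\{h_t\}$, i.e.\ $g\in\pm N$, and then the conjugate is simply $\Delta G$. In fact the stabilizer of a finite-cover joining \emph{is} $\Delta G$: the twist $g$ lives in the support $\{(x\Gamma,\,xg\Gamma):x\in G\}$, not in the stabilizer, so ``graph type'' subgroups do not appear as you describe them. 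You have also omitted the intermediate closed connected subgroup $N\times N\supsetneq\{(h_t,h_t)\}$; invariance under $N\times N$ does force the product joining (disintegrate over one factor and apply unique ergodicity of the horocycle flow on $X$), but this case needs to be raised and closed. Finally, the commensurability of $g\Gamma g^{-1}$ with $\Gamma$ is a nontrivial conclusion coming from the finiteness of $\widehat\mu$: one must show that the $\Delta G$-orbit supporting $\widehat\mu$ carries a finite $\Delta G$-invariant measure if and only if $\Gamma\cap g\Gamma g^{-1}$ has finite index in $\Gamma$. Your phrase about ``finite multiplicity'' gestures at this without deriving it.

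In short, the outline is the right shape, but a proof of Ratner's joining rigidity is not recoverable at this level of detail; for the purposes of this paper the theorem is correctly treated as a black box with the two cited references.
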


\subsection{No $T\times T^2$-minimal points in the diagonal}\
\medskip

Recall that $G=\phi(SL(1,\bbh^{2,3}_\R))=SL(2, \R)$ and $\Gamma_0= \phi(SL(1,\bbh^{2,3}_\Z))$ is a fixed co-compact discrete arithmetic subgroup of $G$ by Theorem \ref{thm-Morris}. For $t,s\in \R$ let
$$h_t=
\begin{pmatrix}
1 & t \\
0 & 1
\end{pmatrix},\quad
g_s=
\begin{pmatrix}
e^{-s} & 0 \\
0 & e^s
\end{pmatrix}.
$$

Let $X=X_{\Gamma_0}=G/\Gamma_0$ and $\lambda\in \R$. Let
$$T: X\rightarrow X, \quad g\Gamma_0 \mapsto h_1g\Gamma_0.$$
and
$$T^\lambda: X\rightarrow X, \quad g\Gamma_0\mapsto h_\lambda g\Gamma_0.$$

\begin{thm}\label{thm-non-minimal}
Let $(X,T)$ be defined as above, $p \neq q\in \N$ with $(p,q)=1$. Then for any $x\in X$, $(x,x)$ is not a minimal point of $T^p\times T^q$.
\end{thm}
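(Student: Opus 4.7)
The plan is to transport $(X\times X,T^p\times T^q)$ onto a copy of $(X\times X,T\times T)$ so that the Glasner--Weiss criterion (Theorem~\ref{thm-GW}) becomes applicable. Using the renormalization identity~\eqref{s1}, I choose $s_p=\tfrac12\log p$ and $s_q=\tfrac12\log q$, so that $g_{s_r}h_r g_{s_r}^{-1}=h_1$ for $r\in\{p,q\}$. The homeomorphism $\Psi\colon X\times X\to X\times X$ given by $(x,y)\mapsto(g_{s_p}x,\,g_{s_q}y)$ then intertwines $T^p\times T^q$ with $T\times T$. In particular, for $x=g\Gamma_0$, $(x,x)$ is $T^p\times T^q$-minimal if and only if $\Psi(x,x)=(g_{s_p}g\Gamma_0,\,g_{s_q}g\Gamma_0)$ is $T\times T$-minimal.

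By Theorem~\ref{thm-GW}, the latter would force the existence of some $s\in\R$ with
$$\alpha:=(g_{s_p}g)^{-1}h_s(g_{s_q}g)=g^{-1}\bigl(g_{s_p}^{-1}h_s g_{s_q}\bigr)g\ \in\ {\rm COM}(\Gamma_0).$$
The decisive observation is that $\operatorname{tr}(\alpha)$ is independent of both $g$ (by cyclicity of trace) and $s$ (the parameter $s$ only touches the off-diagonal entry): a direct $2\times 2$ computation yields
$$g_{s_p}^{-1}h_s g_{s_q}=\begin{pmatrix}e^{s_p-s_q} & s\,e^{s_p+s_q}\\ 0 & e^{s_q-s_p}\end{pmatrix}, \qquad \operatorname{tr}(\alpha)=\sqrt{p/q}+\sqrt{q/p}=\frac{p+q}{\sqrt{pq}}.$$

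To close, I will combine this with the explicit description of the commensurator. By Theorem~\ref{thm-COM} together with~\eqref{s4}, any $\alpha\in{\rm COM}(\Gamma_0)$ has the form $\phi(\beta)/\sqrt{{\rm N_{red}}(\beta)}$ with $\beta=y_0+y_1 i+y_2 j+y_3 k\in A^+(\Q)$, so $\operatorname{tr}(\alpha)=2y_0/\sqrt{y_0^2-2y_1^2-3y_2^2+6y_3^2}$. Equating this to $(p+q)/\sqrt{pq}$ and squaring eliminates $s$ entirely and, after rearrangement, produces
$$\bigl((p-q)y_0\bigr)^2-2\bigl((p+q)y_1\bigr)^2-3\bigl((p+q)y_2\bigr)^2+6\bigl((p+q)y_3\bigr)^2=0.$$
Clearing denominators reduces this to the exact Diophantine form treated by Lemma~\ref{lem-Dioph}, which forces $(p-q)y_0=(p+q)y_1=(p+q)y_2=(p+q)y_3=0$. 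Since $p\ne q$ and $p+q>0$, every $y_i$ vanishes, hence $\beta=0$, contradicting $\beta\in A^+(\Q)$.

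The hard part, I expect, is the very first step: recognizing that the standard renormalization $g_s h_t g_s^{-1}=h_{e^{-2s}t}$ allows the asymmetric product $T^p\times T^q$ to be reconjugated into the symmetric product $T\times T$ for which Glasner--Weiss is stated. Once that is in place, the conjugation-invariance of the trace strips both $g$ and $s$ from the problem, and the substitution $(w,x,y,z)=\bigl((p-q)y_0,(p+q)y_1,(p+q)y_2,(p+q)y_3\bigr)$ lands precisely on the equation $w^2-2x^2-3y^2+6z^2=0$ of Lemma~\ref{lem-Dioph} --- which is presumably why the choice $a=2,\,b=3$ was made at the outset.
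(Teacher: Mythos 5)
Your proof is correct and mirrors the paper's argument: conjugate by the geodesic flow $g_{s(\lambda)}$ with $s(\lambda)=\tfrac12\log\lambda$ to reduce $T^\lambda$ to $T$, invoke the Glasner--Weiss commensurator criterion (Theorem~\ref{thm-GW}), compute the conjugation-invariant trace $(p+q)/\sqrt{pq}$, and derive a contradiction from Lemma~\ref{lem-Dioph}. The only small variation is in the final algebraic step --- you multiply through by $(p+q)^2$ and substitute $(w,x,y,z)=\bigl((p-q)y_0,(p+q)y_1,(p+q)y_2,(p+q)y_3\bigr)$ to land on the Diophantine form directly, whereas the paper instead uses $(pq,p+q)=1$ to conclude $(p+q)\mid 2x_0$ and then divides to form $\tilde x_0=(p-q)x_0/(p+q)\in\Z$; your version is slightly cleaner and does not actually need the coprimality hypothesis for that step.
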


\begin{proof}
Let $\lambda >0$ and
$$\psi_\lambda: X\rightarrow X, \quad \psi_\lambda(g\Gamma)=g_{s(\lambda)}g\Gamma,$$
where $s(\lambda)=\frac{\log \lambda}{2}$.
Then by \eqref{s1} we have the following commuting diagram:
\begin{equation*}
  \xymatrix{
  X \ar[r]^{T^\lambda}\ar[d]_{\psi_\lambda}
                & X \ar[d]^{\psi_\lambda}\\
  X\ar[r]_{T}
                & X .}
\end{equation*}
That is, $\psi_\lambda: (X, T^\lambda)\rightarrow (X, T)$ is
an isomorphism of $(X,T^\lambda)$ and $(X, T)$. Let $x=g\Gamma$. Then
$(x,x)$ is $T^p\times T^q$-minimal if and only if $(g_{s(p)}x, g_{s(q)}x)$ is $T\times T$ minimal. By Theorem \ref{thm-GW}, we have the following claim.

\medskip

\noindent \textbf{Claim:} \textit{
A point $(x,x)$ is $T^p\times T^q$-minimal if and only if there is some $\widetilde{s}=\widetilde{s}(p,q,x)\in \R$ such that
$$O_{p,q,x}\triangleq(g_{s(p)}g)h_{\widetilde{s}}(g_{s(q)}g)^{-1}\in {\rm COM}(\Gamma_0).$$
}

\medskip

Now we show that we can not find such $x$ satisfying condition in Claim.
By Theorem \ref{thm-COM}, for any $O\in {\rm COM}(\Gamma_0)$, we have the form:
$$O=\frac{\phi(\a)}{\sqrt{\det(\a)}}, \ \a=x_0+x_1 \sqrt{2}+x_2\sqrt{3}+x_3\sqrt{6}\in A^+(\Q),$$
where we may assume that $x_0,x_1,x_2,x_3\in \Z$.
We take trace:
\begin{equation}\label{h1}
  {\rm Trace}(O)=\frac{2x_0}{\sqrt{x_0^2-2x_1^2-3x_2^2+6x_3^2}},
\end{equation}
where $x_0,x_1,x_2,x_3\in \Z$ and $x_0^2-2x_1^2-3x_2^2+6x_3^2>0$.

On the other hand, let
$$O_{p,q,x}=(g_{s(p)}g)h_{\widetilde{s}}(g_{s(q)}g)^{-1}.$$
Then
\begin{equation}\label{h2}
  {\rm Trace}(O_{p,q,x}) ={\rm Trace}(g_{s(p)-s(q)})= \sqrt{\frac{p}{q}}+\sqrt{\frac{q}{p}}=\frac{p+q}{\sqrt{pq}}
\end{equation}

By Claim and \eqref{h1}, \eqref{h2}, we have that
\begin{equation}\label{h4}
\frac{p+q}{\sqrt{pq}}=\frac{2x_0}{\sqrt{x_0^2-2x_1^2-3x_2^2+6x_3^2}}.
\end{equation}
We show that \eqref{h4} can not hold.

By \eqref{h4} we have that
\begin{equation}\label{h5}
 (p+q)^2(x_0^2-2x_1^2-3x_2^2+6x_3^2)=(2x_0)^2pq.
\end{equation}

Since $(pq,p+q)=1$, $p+q|2x_0$. It follows that $(p+q)|(p-q)x_0$. Let
$$\widetilde{x_0}=\frac{(p-q)x_0}{p+q}\in \Z.$$
By \eqref{h5},
$$\widetilde{x_0}^2- 2x_1^2- 3x_2^2+6x_3^2=0.$$
By Lemma \ref{lem-Dioph}
$$\widetilde{x_0}=x_1=x_2=x_3=0,$$
and hence
$${x_0}=x_1=x_2=x_3=0.$$
This contradicts with the fact that $x_0^2-2x_1^2-3x_2^2+6x_3^2>0$.
Thus no $x\in X$ satisfies the condition in Claim. That is, for each $x\in X$, $(x,x)$ is not $T^p\times T^q$-minimal. The proof is complete.
\end{proof}

\begin{thm}\label{thm-tran}
Let $(X,T)$ be defined as above, $p \neq q\in \N$ with $(p,q)=1$. Then for any $x\in X$, $(x,x)$ is a transitive point of $(X\times X, T^p\times T^q)$.
\end{thm}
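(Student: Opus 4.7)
The plan is to reduce to a continuous-flow statement and then apply Ratner's rigidity results for horocycle flows. First, observe that the map $\psi_p \times \psi_q : (X \times X, T^p \times T^q) \to (X \times X, T \times T)$, built from the isomorphisms $\psi_\lambda$ used in the proof of Theorem \ref{thm-non-minimal}, is a topological conjugacy that sends $(x,x)$ to $z := (g_{s(p)}x, g_{s(q)}x)$. Since conjugacies preserve transitive points, it suffices to prove that $z$ is a $T \times T$-transitive point of $X \times X$.

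I argue by contradiction: suppose $Z := \overline{\O(z, T \times T)}$ is a proper closed $T \times T$-invariant subset of $X \times X$. By Krylov--Bogolyubov, $Z$ carries an ergodic $T \times T$-invariant Borel probability measure $\nu$. Since $(X, T)=(X,h_1)$ is strictly ergodic with unique invariant measure $\mu_0$, both marginals of $\nu$ must equal $\mu_0$, so $\nu$ is an ergodic selfjoining of $(X, \mu_0, T)$. By Ratner's Joinings Theorem (Theorem \ref{thm-Ratner-Joining}), $\nu$ is either the product selfjoining $\mu_0 \times \mu_0$ or a finite-cover selfjoining. The product case is immediately excluded, since it would give $\mathrm{supp}(\mu_0 \times \mu_0) = X \times X \subseteq Z$, contradicting $Z \neq X \times X$. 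Hence $\nu = \Psi_* \mu_{\Gamma'}$ is a finite-cover selfjoining for some $g_0 \in G$ with $\Gamma' = \Gamma_0 \cap g_0 \Gamma_0 g_0^{-1}$ of finite index in $\Gamma_0$, and $\mathrm{supp}(\nu) = \Psi(G/\Gamma')$ is the continuous image of the strictly ergodic system $(G/\Gamma', h_1)$; in particular, $\mathrm{supp}(\nu)$ is $T \times T$-minimal.

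The next step, which is the main technical obstacle, is to show that $z$ itself lies in $\mathrm{supp}(\nu)$. For this I would invoke Ratner's orbit-closure and equidistribution theorems for the horocycle action $\{h_t \times h_t\}_{t \in \R}$ on $X \times X$: every orbit equidistributes toward the natural homogeneous measure on its orbit closure, and this orbit closure is itself a homogeneous subvariety. Combined with the fact that, for horocycle actions, any $h_1$-invariant measure on such a homogeneous subvariety is automatically $\{h_t\}$-invariant, the $h_1 \times h_1$-orbit closure of $z$ coincides with its $\{h_t \times h_t\}$-orbit closure; thus $Z$ is itself a homogeneous subvariety, and under our standing assumption $Z \neq X \times X$ it must coincide with $\mathrm{supp}(\nu)$. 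Consequently $z \in \mathrm{supp}(\nu)$, and since $\mathrm{supp}(\nu)$ is $T \times T$-minimal, so is $z$.

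Finally, pulling back through the conjugacy $\psi_p \times \psi_q$, the minimality of $z$ for $T \times T$ is equivalent to $(x,x)$ being $T^p \times T^q$-minimal, contradicting Theorem \ref{thm-non-minimal}. This contradiction forces $Z = X \times X$, completing the proof. The delicate point is bridging Ratner's Joinings Theorem, which classifies ergodic selfjoinings as measures, with the behavior of the orbit closure of an \emph{individual} point $z$; the joinings theorem alone does not directly imply $z \in \mathrm{supp}(\nu)$, and one has to appeal to the deeper equidistribution/orbit-closure results of Ratner (together with the unique ergodicity of the $h_1$-action on each homogeneous orbit closure).
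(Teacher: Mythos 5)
Your proposal is essentially correct and uses the same three pillars as the paper: the conjugacy via $\psi_\lambda$, Ratner's joinings theorem, and contradiction with Theorem~\ref{thm-non-minimal}. But the route is more circuitous than the paper's, and the detour creates exactly the gap you flag at the end.

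The paper does not start from Krylov--Bogolyubov. Instead it invokes Ratner's equidistribution and measure-classification theorems \emph{up front} to say that $(x_p,x_q)$ is a $T\times T$-\emph{generic} point for a specific ergodic measure $\xi$. Since both marginals of $\xi$ are $\mu_0$ by strict ergodicity, $\xi$ is an ergodic selfjoining; by Ratner's joinings theorem it is a finite cover or the product. If it is a finite cover, then $\operatorname{supp}(\xi)$ is a minimal homogeneous subvariety and the generic point $(x_p,x_q)$ lies in it (indeed its orbit closure equals $\operatorname{supp}(\xi)$ because it is generic for $\xi$), making $(x,x)$ minimal for $T^p\times T^q$ --- contradiction with Theorem~\ref{thm-non-minimal}. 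So $\xi$ is the product joining, and genericity gives $\overline{\O((x_p,x_q), T\times T)} = \operatorname{supp}(\mu_0\times\mu_0) = X\times X$ directly. Transitivity falls out without any contradiction argument and without ever having to separately prove $z \in \operatorname{supp}(\nu)$.

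In your version, Krylov--Bogolyubov only produces \emph{some} ergodic measure $\nu$ supported in $Z$; nothing forces $z$ to be generic for $\nu$ or even to lie in $\operatorname{supp}(\nu)$, which is precisely the ``delicate point'' you identify. You then have to re-invoke Ratner's equidistribution and orbit-closure theorems to patch this --- but once you invoke them, you might as well have started there, which is what the paper does. The fix you sketch (that $Z$ is itself a homogeneous subvariety and the discrete-time orbit closure equals the continuous-time one) is correct in spirit, but it duplicates work and leans on an additional fact (discrete-time and continuous-time horocycle orbit closures coincide on these homogeneous sets) that the paper sidesteps by appealing to discrete-time genericity from the start. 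So: same toolbox, but the paper's ordering of the arguments is cleaner and avoids the hole you (rightly) worried about.
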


\begin{proof}
Let $\Psi_\lambda$ be as in the proof of Theorem \ref{thm-non-minimal}. Let $x\in X$ and $x_\lambda=\Psi_\lambda(x)$ for $\lambda >0$.

By Ratner Equidistribution Theorem and Ratner Measure Classification Theorem \cite{R91, R912}, for any $x\in X$ and $p,q\in \N$, $(x_p,x_q)$ is a $T\times T$-generic point for an ergodic measure $\xi$, which is a joining of $(X,T)$. By Ratner Joinings Theorem (Theorem \ref{thm-Ratner-Joining}), $\xi$ must be either a finite cover, or the product self-joining. If $\xi$ is a finite cover, then $(x_p,x_q)$ is $T\times T$-minimal and hence $(x,x)$ is $T^p\times T^q$-minimal, which contradicts with Theorem \ref{thm-non-minimal}. Thus $\xi$ is the product self-joining. By Ratner Equidistribution Theorem $(x_p,x_q)$
is a $T\times T$-generic point of $\xi$, and hence the $T\times T$-orbit closure of $(x_p, x_q)$ is the support of $\xi$ which is $X \times X$. Thus $(x_p, x_q)$ is a $T\times T$-transitive point.  Since $\Psi_p,\Psi_q$ are isomorphisms, $(x,x)$ is a $T^p\times T^q$-transitive point.
\end{proof}

\section{Multiple recurrence times}

In this section we prove Theorem B. To do this we need some preparations.

\subsection{Furstenberg families}

Let us recall some notions related to Furstenberg families (for
details see \cite{F81}). Let $\P=\P({\Z})$ be the collection
of all subsets of $\Z$. A subset $\F$ of $\P$ is a {\em
(Furstenberg) family}, if it is hereditary upwards, i.e. $F_1
\subset F_2$ and $F_1 \in \F$ imply $F_2 \in \F$. A family $\F$ is
{\it proper} if it is a proper subset of $\P$, i.e. neither empty
nor all of $\P$. If a proper family
$\F$ is closed under finite intersections, then $\F$ is called a {\it
filter}. For a family $\F$, the {\it dual family} is
$$\F^*=\{F\in\P: {\Z} \setminus F\notin\F\}=\{F\in \P:F \cap F' \neq
\emptyset \ for \ all \ F' \in \F \}.$$ $\F^*$ is a family, proper
if $\F$ is. Clearly, $(\F^*)^*=\F$ and ${\F}_1\subset {\F}_2$ implies that ${\F}_2^* \subset {\F}_1^*.$
Denote by $\F_{inf}$ the family consisting of all infinite subsets of $\Z$.
The
collection of all syndetic (resp. thick) subsets of $\Z$ is denoted by
$\F_s$ (resp. $\F_t$). Note that $\F_s^*=\F_t$ and $\F_t^*=\F_s$. Denote the set of
all piecewise syndetic sets by $\F_{ps}$. A set $F$ is called {\em thickly syndetic} if for every $N\in \N$ the positions where interval with length $N$ runs begin form
a syndetic set. Denote the set of
all thickly syndetic sets by $\F_{ts}$, and we have $\F_{ts}$ is a filter and $\F^*_{ps}=\F_{ts}, \F^*_{ts}=\F_{ps}$.

Let $\F$ be a family and $(X,T)$ be a t.d.s. We say $x\in X$ is
$\F$-{\it recurrent} if for each neighborhood $U$ of $x$, $N(x,U)\in
\F$. So the usual recurrent point is just $\F_{inf}$-recurrent one.
It is known that a t.d.s.
$(X,T)$ is an $M$-{system}
if and only if there is a transitive
point $x$ such that $N(x,U)\in \F_{ps}$ for any neighborhood $U$ of
$x$ (see for example \cite[Lemma 2.1]{HY05} or \cite[Theorem 3.1]{Ak97}).

\subsection{Some useful lemmas}

Let $x\in X$. 
For convenience, sometimes we denote the orbit closure of $x\in X$ under $T$ by  $\overline{\O}(x,T)$ or
$\overline{\O}(x)$, instead of $\overline{\O(x,T)}$. Let $A\subseteq X$, the  orbit of $A$ is defined by
$\O(A,T)=\{T^nx: x\in A,n\in \Z\}$, and is denoted by $\overline{\O}(A,T)= \overline{\O(A,T)}$.

\medskip

Let $(X,T)$ be a t.d.s., $A\subseteq X$ and $d\in \N$. Set
$$\Delta_d(A)=\{(x,x,\ldots,x): x\in A\}\subseteq X^d,$$
$$T^{(d)}=T\times \ldots\times T \ (d \ \text{times}),\ \text{and}\ \tau_d=\tau_d(T)=T\times T^2 \times \ldots \times T^{d}.$$

Let $(X,T)$ be a t.d.s. and $d\in N$. Let
$$N_d(X)=\overline{\O}(\D_d(X), \tau_d).$$
If $(X,T)$ is transitive and $x\in X$ is a transitive point, then
$$N_d({X})=\overline{\O}(x^{(d)},
\langle\tau_d, T^{(d)}\rangle),$$
is the orbit closure of
$x^{(d)}=(x,\ldots,x)$ ($d$ times) under the action of the group
$\langle\tau_d, T^{(d)}\rangle$ which is generated by $\tau_d$ and $T^{(d)}$.

\medskip

The following result was obtained by Glasner.

\begin{prop}[Glasner \cite{G94}]\label{thm-Glasner}
Let $(X,T)$ be a minimal t.d.s. Then
\begin{enumerate}
\item $(N_d(X), \langle\tau_d, T^{(d)}\rangle)$ is minimal for each $d\in\N$.

\item If in addition $(X,T)$ is weakly mixing, then there is a dense $G_\delta$ set $X_0$ of $X$ such that
for each $x\in X_0$ and each $d\in\N$, $\overline{\O}(x^{(d)},\tau_d)=X^d$.
\end{enumerate}
\end{prop}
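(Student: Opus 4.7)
The proof of part (1) consists of two steps. In the first step, I establish that for every $x \in X$, the $G$-orbit of the diagonal point $x^{(d)}$ is dense in $N_d(X)$, where $G = \langle \tau_d, T^{(d)} \rangle$. Given $y = (y_1, \ldots, y_d) \in N_d(X)$ and $\epsilon > 0$, the definition $N_d(X) = \overline{\O(\Delta_d(X), \tau_d)}$ yields $z \in X$ and $n \in \Z$ with $\rho(T^{kn} z, y_k) < \epsilon/2$ for each $k = 1, \ldots, d$. Choose $\delta > 0$ small enough (using uniform continuity of the \emph{finite} family $\{T^{kn} : k = 1, \ldots, d\}$) so that $\rho(u, z) < \delta$ implies $\rho(T^{kn} u, T^{kn} z) < \epsilon/2$ for every $k$; then by minimality of $(X, T)$ find $m \in \Z$ with $\rho(T^m x, z) < \delta$. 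The element $g = \tau_d^n (T^{(d)})^m \in G$ satisfies $g(x^{(d)}) = (T^{n+m} x, T^{2n+m} x, \ldots, T^{dn+m} x)$, which is coordinatewise within $\epsilon$ of $y$, proving density.

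In the second step, I verify that $(N_d(X), G)$ is minimal. Let $M \subseteq N_d(X)$ be a non-empty closed $G$-invariant set. Since $T^{(d)} \in G$, $M$ contains a $T^{(d)}$-minimal subset $M_0$, and by minimality of $(X, T)$ together with the factor map $\pi_1 : (N_d(X), T^{(d)}) \to (X, T)$ we have $\pi_1(M_0) = X$. If $M_0 \cap \Delta_d(X) \neq \emptyset$, then by $T^{(d)}$-minimality of both $M_0$ and $\Delta_d(X)$ (the latter via the isomorphism $x \mapsto x^{(d)}$ with $(X, T)$) we conclude $M_0 = \Delta_d(X)$; combining with the first step then gives $N_d(X) = \overline{G \cdot \Delta_d(X)} \subseteq M$, hence $M = N_d(X)$. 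The main obstacle is therefore producing a diagonal point in $M_0$. I would handle this via the enveloping semigroup $E(X, T)$: pick $(x_1, \ldots, x_d) \in M_0$ and a minimal idempotent $u \in E(X, T)$ with $u x_1 = x_1$ (available since $(X, T)$ is minimal), then $u (x_1, \ldots, x_d) = (x_1, u x_2, \ldots, u x_d)$ is a $T^{(d)}$-limit of $(x_1, \ldots, x_d)$ and hence lies in $M_0$. Iterating this pointwise idempotent action together with the $\tau_d$-action (whose $n$-th power sends the $k$-th coordinate through $T^{kn}$, creating the necessary coupling between coordinates) and exploiting compactness, one drives a point of $M_0$ onto the diagonal.

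For part (2), I would invoke Furstenberg's multiple intersection theorem for weakly mixing minimal systems: for any non-empty open sets $A_0, A_1, \ldots, A_d \subseteq X$, there exists $n \in \Z$ with $A_0 \cap T^{-n} A_1 \cap T^{-2n} A_2 \cap \cdots \cap T^{-dn} A_d \neq \emptyset$. Let $\{U_i\}$ be a countable base of $X^d$ consisting of product opens $U_i = A_1^{(i)} \times \cdots \times A_d^{(i)}$, and set $V_i = \{x \in X : \tau_d^n x^{(d)} \in U_i \text{ for some } n \in \Z\}$; each $V_i$ is open in $X$. Given any non-empty open $W \subseteq X$, applying the multiple intersection theorem with $A_0 = W$ and $A_k = A_k^{(i)}$ yields $n \in \Z$ and $x \in W$ with $T^{kn} x \in A_k^{(i)}$ for all $k$, so $\tau_d^n x^{(d)} \in U_i$ and $x \in V_i \cap W$. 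Hence each $V_i$ is dense open, and $X_0 := \bigcap_i V_i$ is a dense $G_\delta$ subset of $X$; for every $x \in X_0$, the $\tau_d$-orbit of $x^{(d)}$ meets every basic open set of $X^d$, so $\overline{\O(x^{(d)}, \tau_d)} = X^d$.
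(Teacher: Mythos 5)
The paper does not prove this proposition; it is quoted from Glasner's paper \cite{G94}, so there is no in-text proof to compare against. Judged on its own terms, your argument has a genuine gap in part (1) at exactly the step you flag as ``the main obstacle.'' Your step~1 (density of the $\langle\tau_d,T^{(d)}\rangle$-orbit of any diagonal point in $N_d(X)$) is correct. But step~2 does not close. First, a small but real problem: you pass to a $T^{(d)}$-minimal subset $M_0\subseteq M$ and then want to combine the idempotent action with ``the $\tau_d$-action'' on $M_0$; however, $M_0$ is only $T^{(d)}$-invariant, not $\tau_d$-invariant, so $\tau_d$ need not preserve $M_0$ and the ``iteration'' you describe leaves the set you are working in. Second, and more fundamentally, the key claim that applying a minimal idempotent $u$ with $ux_1=x_1$ and then iterating with $\tau_d$ ``drives a point of $M_0$ onto the diagonal'' is not justified and is not obviously true; note that $(x_1,ux_2,\dots,ux_d)$ still has no reason to lie near $\Delta_d(X)$, and a $T^{(d)}$-minimal orbit closure of a non-diagonal point (e.g.\ a minimal almost periodic pair $(x,y)$ with $x\ne y$) contains no diagonal points at all. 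Producing a diagonal point inside an arbitrary nonempty closed $G$-invariant subset of $N_d(X)$ is precisely the content of the theorem, and the known proofs (Furstenberg--Weiss, Glasner) require a genuine additional idea, typically an induction on $d$ using the factor maps $(N_d(X),G_d)\to(N_{d-1}(X),G_{d-1})$ obtained by deleting a coordinate, or a considerably more delicate Ellis-semigroup argument than the one you sketch. As written, ``one drives a point of $M_0$ onto the diagonal'' is an assertion, not a proof.

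For part (2), your Baire-category derivation from the ``multiple intersection theorem'' is formally correct, but the theorem you invoke --- that for a weakly mixing \emph{minimal} system and nonempty open $A_0,\dots,A_d$ there is $n$ with $A_0\cap T^{-n}A_1\cap\cdots\cap T^{-dn}A_d\ne\emptyset$ --- is not an elementary or classical consequence of weak mixing alone (weak mixing gives thickness of $N(U,V)$, but the differently-scaled iterates $T^{-kn}$ prevent the usual filter argument). In fact, this multiple intersection property is essentially equivalent to $N_d(X)=X^d$ together with point-transitivity of the diagonal, which is the heart of Glasner's theorem; attributing it to Furstenberg and treating it as a black box makes the argument circular in spirit. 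If you can establish part (1) rigorously, a cleaner route to part (2) is to first show that for weakly mixing minimal $X$ one has $N_d(X)=X^d$, and only then run your Baire-category argument on $(X^d,\tau_d)$.
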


The following Lemma \ref{lem-Bowen} is a modification of \cite[Lemma 2.1]{F} or \cite{FW}.

\begin{lem}\label{lem-Bowen}
Let $(X,T)$ be a t.d.s. Let $A\subset X$ be a subset with the property that for every $x\in A$, and $\ep>0$, there exists $y\in A$ such that
$$\{n\in \Z: \rho(T^ny, x)<\ep \}\in \F_{ps}.$$
Then for every $\ep>0$, there exists a point $z\in A$ such that
$$\{n\in \Z: \rho(T^n z, z)<\ep \}\in \F_{ps}.$$
\end{lem}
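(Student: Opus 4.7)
My plan is to adapt the classical Furstenberg--Weiss descent argument used to extract a recurrent point from a ``return-to-itself'' hypothesis, but at the very last step substitute a whole piecewise syndetic set for what would normally be a single approximation time. Fix $\ep>0$ and pick a rapidly decreasing sequence $\delta_0=\ep/4>\delta_1>\cdots$ with $\delta_{k+1}\le\delta_k/2$.

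Starting from any $y_0\in A$, I would inductively build $y_k\in A$, integers $n_k$ and radii $\delta_k$. Given $y_{k-1}$ and $\delta_{k-1}$, apply the hypothesis with $x=y_{k-1}$ and parameter $\delta_{k-1}/2$ to get $y_k\in A$ such that
$$\widetilde N_k:=\{n\in\Z:\rho(T^ny_k,y_{k-1})<\delta_{k-1}/2\}\in\F_{ps}.$$
Select any $n_k\in\widetilde N_k$, and use pointwise continuity of the single map $T^{n_k}$ at $y_k$ to pick $\delta_k\le\delta_{k-1}/2$ so small that $\rho(w,y_k)<\delta_k$ forces $\rho(T^{n_k}w,y_{k-1})<\delta_{k-1}$. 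Iterating yields the key chaining property: for every $k$, every $0\le j<k$, and every $w$ with $\rho(w,y_k)<\delta_k$,
$$\rho\bigl(T^{n_{j+1}+n_{j+2}+\cdots+n_k}w,\,y_j\bigr)<\delta_j.$$

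By compactness of $X$, the sequence $(y_k)$ has some pair of indices $0\le m<m'$ with $\rho(y_m,y_{m'})<\ep/4$. Now I would invoke the full piecewise syndetic set $\widetilde N_{m'}$ rather than just the choice $n_{m'}$. For any $n\in\widetilde N_{m'}$, the point $w:=T^ny_{m'}$ satisfies $\rho(w,y_{m'-1})<\delta_{m'-1}/2<\delta_{m'-1}$, so applying the chaining property at level $k=m'-1$ down to $j=m$ gives
$$\rho\bigl(T^{n+C}y_{m'},\,y_m\bigr)<\delta_m\le\ep/4,\qquad C:=n_{m+1}+\cdots+n_{m'-1}.$$
Combined with $\rho(y_m,y_{m'})<\ep/4$ this yields $\rho(T^{n+C}y_{m'},y_{m'})<\ep$ for every $n\in\widetilde N_{m'}$; hence $\widetilde N_{m'}+C\subseteq\{n\in\Z:\rho(T^ny_{m'},y_{m'})<\ep\}$. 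Since $\F_{ps}$ is invariant under integer translation and hereditary upward, this return-time set is piecewise syndetic, and $z:=y_{m'}\in A$ is the required point.

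The main technical delicacy is the continuity chaining. What makes it workable is that at every intermediate stage only continuity of the single map $T^{n_k}$ at the single point $y_k$ is needed---no uniformity over the entire piecewise syndetic set $\widetilde N_k$ is ever required---so the $\delta_k$'s can be shrunk one after another without conflict. The piecewise syndetic structure is harvested only in the final application of the hypothesis, where the PS set $\widetilde N_{m'}$ is transported, by a shift of size $C$, into piecewise syndetic return times of $z=y_{m'}$ into $B(y_{m'},\ep)$.
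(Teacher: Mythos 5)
Your proof is correct, and it is essentially the same argument as the paper's: the classical Furstenberg--Weiss descent, choosing $\delta_k$ by pointwise continuity of the single map $T^{n_k}$, using compactness to locate a pair $y_m,y_{m'}$ that are $\ep/4$-close, and then harvesting the whole piecewise syndetic set $\widetilde N_{m'}$ at the final step and shifting it by $C=n_{m+1}+\cdots+n_{m'-1}$ into the return-time set of $z=y_{m'}$. The only cosmetic differences are in the radius bookkeeping ($\ep/4$ vs.\ $\ep/2$) and indexing.
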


\begin{proof}
Let $\ep>0$. We will define inductively a sequence $\{z_n\}_{n=0}^\infty$, one of which will satisfy our conclusion.

Set $\ep_1=\ep/2$. Let $z_0$ be any point of $A$ and by the assumption, let $z_1\in A$ satisfy that
\begin{equation*}
  \{n\in \Z: \rho(T^n z_1, z_0)<\ep_1 \}\in \F_{ps}.
\end{equation*}
Let $n_1\in \{n\in \Z: \rho(T^n z_1, z_0)<\ep_1 \}$. Now let $\ep_2$ with $0<\ep_2<\ep_1$ be such that whenever $\rho(z,z_1)<\ep_2$,
\begin{equation*}
 \rho(T^{n_1}z, z_0)<\ep_1.
\end{equation*}
For $\ep_2$ and $z_1$, by the assumption find $z_2\in A$ with
\begin{equation*}
  \{n\in \Z: \rho(T^n z_2, z_1)<\ep_2 \}\in \F_{ps}.
\end{equation*}
Let $n_2\in \{n\in \Z: \rho(T^n z_2, z_1)<\ep_2 \}$.
Next let $\ep_3$ with $0<\ep_3<\ep_1$ be such that whenever $\rho(z,z_2)<\ep_3$,
\begin{equation*}
 \rho(T^{n_2}z, z_1)<\ep_2.
\end{equation*}
Continue in this way defining $z_0,z_1,\ldots,z_k\in A$, $n_1,n_2,\ldots,n_k\in \Z$ and $\ep_1,\ep_2,\ldots,\ep_k\in (0,\ep/2)$ such that
\begin{equation}\label{f1}
  \{n\in \Z: \rho(T^n z_i, z_{i-1})<\ep_i \}\in \F_{ps}, 1\le i\le k
\end{equation}
\begin{equation}\label{f2}
  n_i\in \{n\in \Z: \rho(T^n z_i, z_{i-1})<\ep_i \}, 1\le i\le k.
\end{equation}
We define $\ep_{k+1}>0$
with $0<\ep_{k+1}<\ep_1$ be such that whenever
\begin{equation}\label{f3}
 \rho(z,z_k)<\ep_{k+1}\ \Rightarrow  \rho(T^{n_k}z, z_{k-1})<\ep_k.
\end{equation}
By the assumption
find $z_{k+1}\in A$ with
\begin{equation}\label{f4}
  \{n\in \Z: \rho(T^n z_{k+1}, z_k)<\ep_{k+1} \}\in \F_{ps}.
\end{equation}
Let $n_{k+1}\in \{n\in \Z: \rho(T^n z_{k+1}, z_k)<\ep_{k+1} \}$.
Thus we define inductively a sequence $\{z_n\}_{n=0}^\infty$.

\medskip

For each pair $i,j\in \N$ with $i<j$, we have that
\begin{equation}\label{f5}
 \rho(T^{n_j+n_{j-1}+\ldots+n_{i+1}}z_j,z_i)<\ep_{i+1}<\ep_1=\frac{\ep}{2}
\end{equation}
Since $X$ is compact, we can find some pair $i, j$ with $i<j$ such that
\begin{equation}\label{f100}
\rho(z_i,z_j)<\frac{\ep}{2}.
\end{equation}
Together with \eqref{f5}, we have that
\begin{equation}\label{f6}
 \rho(T^{n_j+n_{j-1}+\ldots+n_{i+1}} z_j,z_j)<\frac{\ep}{2}+\frac{\ep}{2}=\ep.
\end{equation}

Furthermore, by \eqref{f1},
\begin{equation}\label{f7}
  \{n\in \Z: \rho(T^n z_j, z_{j-1})<\ep_j \}\in \F_{ps}.
\end{equation}
For each $ n\in \{n\in \Z: \rho(T^n z_j, z_{j-1})<\ep_j \}$,
by \eqref{f3},
$$\rho(T^{n+n_{j-1}+\ldots+n_{i+1}}z_j,z_i)<\ep_{i+1}<\ep_1$$
Combining this with \eqref{f100}, we have that
\begin{equation}\label{f6}
 \rho(T^{n+n_{j-1}+\ldots+n_{i+1}} z_j,z_j)<\frac{\ep}{2}+\frac{\ep}{2}=\ep.
\end{equation}
Thus
$$m+\{n\in \Z: \rho(T^n z_j, z_{j-1})<\ep_j \}\subset \{n\in \Z: \rho(T^n z_j, z_{j})<\ep \},$$
where $m=n_{j-1}+\ldots+n_{i+1}$. Since $m+\{n\in \Z: \rho(T^n z_j, z_{j-1})<\ep_{j} \}\in \F_{ps}$,
we have that
$$ \{n\in \Z: \rho(T^n z_j, z_{j})<\ep \}\in \F_{ps}.$$
The proof is complete.
\end{proof}


\subsection{Proof of Theorem B}
Now we are able to show Theorem B.


\begin{proof}[Proof of Theorem B]
Let $(X,T)$ be a minimal t.d.s. and $d\in\N$.  Let $U$ be a non-empty open subset of $X$. We want to show there is some $x\in U$ such that
\begin{equation}\label{g1}\{n\in \Z: \tau_d^nx^{(d)}\in U\times \ldots\times U \}\in \F_{ps}.
\end{equation}
Let $A=\D_d(X)$. We have the following

\medskip

\noindent{\bf Claim:} For a given $x\in X$ and $\ep>0$ there is $y\in X$ such that
$$\{n\in \Z: \widehat{\rho}(\tau_d^ny^{(d)}, x^{(d)})<\ep \}\in \F_{ps},$$
where $\widehat{\rho}\big((x_1,\ldots,x_d),(y_1,\ldots,y_d)\big)=\max_{1\le i\le d}\rho(x_i,y_i)$.

\medskip

Assume first that the Claim holds. By Lemma \ref{lem-Bowen}, for each $\ep>0$, there is $z\in X$ with
\begin{equation}\label{newyear}
\{n\in \Z: \widehat{\rho}(\tau_d^n z^{(d)}, z^{(d)})<\ep \}\in \F_{ps}.
\end{equation}

Next we show that for any non-empty open set $U$, we can find some $x\in U$ such that \eqref{g1} holds.
Let $w\in U$ and $\ep>0$ such that $B_{2\ep}(w)\subset U$, and let $V=B_\ep(w)$. As $(X,T)$ is minimal,
there is $N\in\N$ such that $X=\cup_{i=1}^NT^{-i}V$. Let $\eta>0$ with the property that
$\rho(y_1,y_2)<\eta$ implies $\rho(T^iy_1,T^iy_2)<\ep$ for each $0\le i\le dN$. By (\ref{newyear}),
there is some $z\in X$ with
$$\{n\in \Z: \widehat{\rho}(\tau_d^n z^{(d)}, z^{(d)})<\eta \}\in \F_{ps}.$$
Assume that $z\in T^{-i}V$ with $1\le i\le N$. Then $x=T^iz\in V$ and $B_\ep(x)\subset U$. Thus we have
$$\{n\in \Z: \widehat{\rho}(\tau_d^n z^{(d)}, z^{(d)})<\eta \}\subset \{n\in \Z: \widehat{\rho}(\tau_d^n x^{(d)}, x^{(d)})<\ep \}$$
which is a subset of $\{n\in \Z: \tau_d^nx^{(d)}\in U\times \ldots\times U \}.$
Hence
$$\{n\in \Z: \tau_d^nx^{(d)}\in U\times \ldots\times U \}\in \F_{ps}.$$
That is, (\ref{g1}) holds.

\medskip
Now we prove the Claim.
Let $\ep>0$ and fix a point $x\in X$.
By Proposition \ref{thm-Glasner}, $x^{(d)}$ is a $\langle\tau_d, T^{(d)}\rangle$-minimal point. Hence
\begin{equation}\label{h7}
  H=\{(n,m)\in \Z^2: \widehat{\rho}(\tau^n(T^{(d)})^mx^{(d)},x^{(d)})<\ep\}
\end{equation}
is $\Z^2$-syndetic. Thus there is some $L\in \N$ such that
\begin{equation}\label{h8}
  H+[-L,L]^2=\Z^2,
\end{equation}
where $[-L,L]=\{-L, -L+1,\ldots,L\}$ is an interval of $\Z$.
That is, for each $(n,m)\in \Z^2$, there exits $(i,j)\in [-L,L]^2$ such that $(n+i,m+j)=(n,m)+(i,j)\in H.$

For each $j\in \{-L,-L+1,\ldots, L\}$, let $$x_j^{(d)}\triangleq(T^{(d)})^jx^{(d)}=(T^jx)^{(d)},$$
and
$$A_j\triangleq\{n\in \Z: \widehat{\rho}(\tau^{n}_dx_j^{(d)}, x^{(d)})<\ep\}=\{n\in \Z: \widehat{\rho}(\tau^{n}_d(T^{(d)})^jx^{(d)}, x^{(d)})<\ep\}.$$
By \eqref{h8}, for each $n\in \Z$, there is some $(i,j)\in [-L,L]^2$ such that $$(n,0)+(i,j)\in H.$$
That is,
$$\widehat{\rho}(\tau^{n+i}(T^{(d)})^j x^{(d)},x^{(d)})<\ep.$$
Hence $n+i\in A_j$. It follows that
$$\Z=\bigcup_{-L\le i,j\le L} (A_j-i).$$
Since $\F_{ps}$ has the Ramsey property (see \cite[Theorem 1.24]{F}), there is some $i,j\in \{-L,-L+1,\ldots, L\}$ such that
$A_j-i\in \F_{ps}$. And hence
$$\{n\in \Z: \widehat{\rho}(\tau^{n}_dx_j^{(d)}, x^{(d)})<\ep\}=A_j\in \F_{ps},$$
as $\F_{ps}$ is translation invariant.
The proof is complete.
\end{proof}

By the result in the previous section, there is some minimal system $(X,T)$ such that
for any $p\neq q\in \N$, $(p,q)=1$ there is some non-empty subset $U$ of $X$ satisfying that for any $x\in U$
$$N_{T^p\times T^q}((x,x), U\times U)=\{n\in \Z: (T^p\times T^q)^n(x,x)\in U\times U\}$$
is not syndetic. In fact we have the following example:

\begin{thm}\label{not-syndetic}
There is some minimal system $(X,T)$ such that for any $p\neq q\in \N$, $(p,q)=1$ there is some $\d>0$ with
the following property: for any open subset $U$
of $X$ with ${\rm diam}(U)<\d$, for any $x\in U$,
the set
$$N_{T^p\times T^q}((x,x), U\times U)$$
is not syndetic.
\end{thm}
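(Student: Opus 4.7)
I claim the horocycle system $(X,T) = (G/\Gamma_0, h_1)$ from Section~2 witnesses the statement. Fix $p \ne q \in \mathbb{N}$ with $(p,q) = 1$. The overall strategy is to exhibit a concrete minimal subset $M_0$ of $(X \times X, T^p \times T^q)$ which is uniformly bounded away from the diagonal $\Delta = \{(y,y): y \in X\}$, and then run a shadowing argument to produce arbitrarily long gaps in the return set, forcing it to fail syndeticity.

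The first step is to construct $M_0$. Let $A(s) = g_{s(p)}^{-1} h_s g_{s(q)}$ be the matrix curve from the proof of Theorem~\ref{thm-non-minimal}, and set $B := A(0)^{-1} = \operatorname{diag}(\sqrt{q/p}, \sqrt{p/q}) \in SL(2,\mathbb{R})$. Consider the point $z_0 = (\Gamma_0, B\Gamma_0) \in X \times X$. Conjugating $(X\times X, T^p \times T^q)$ to $(X \times X, T\times T)$ via $\Phi = \Psi_p \times \Psi_q$ (the argument preceding the Claim in the proof of Theorem \ref{thm-non-minimal}) and applying the Glasner--Weiss criterion (Theorem~\ref{thm-GW}), a pair $(g_1 \Gamma_0, g_2 \Gamma_0)$ is a minimal point of $(X \times X, T^p \times T^q)$ precisely when $g_1^{-1} A(s) g_2 \in \mathrm{COM}(\Gamma_0)$ for some $s \in \mathbb{R}$. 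For $z_0$ at $s = 0$ we obtain $I \cdot A(0) \cdot B = I \in \mathrm{COM}(\Gamma_0)$, so $z_0$ is minimal. Let $M_0 := \overline{O(z_0, T^p \times T^q)}$, a minimal subset of $(X \times X, T^p \times T^q)$.

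For the second step, suppose $(h\Gamma_0, h\Gamma_0) \in M_0 \cap \Delta$ for some $h \in G$. Since $M_0$ is minimal, this diagonal point is itself a minimal point of $(X \times X, T^p \times T^q)$, directly contradicting Theorem~\ref{thm-non-minimal}. Hence $M_0 \cap \Delta = \emptyset$, and by compactness $\delta_0 := d(M_0, \Delta) > 0$; set $\delta := \delta_0/4$. Now let $U \subset X$ be open with $\operatorname{diam}(U) < \delta$ and $x \in U$. For any $(y,z) \in U \times U$ one has $d((y,z), \Delta) \le d(y,z) < \delta$ (use $(y,y) \in \Delta$), so $\overline{U \times U} \subset \{w : d(w,\Delta) \le \delta\}$ and thus $d(M_0, \overline{U \times U}) \ge \delta_0 - \delta = 3\delta$. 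Fix any $L \in \mathbb{N}$. By uniform continuity of the first $L$ iterates of $T^p \times T^q$ on the compact space $X \times X$, choose $\eta > 0$ such that $d(w, w') < \eta$ implies $d((T^p \times T^q)^k w, (T^p \times T^q)^k w') < 2\delta$ for all $0 \le k \le L$. By Theorem~\ref{thm-tran}, $(x,x)$ is transitive in $(X \times X, T^p \times T^q)$, so its orbit is dense in $X \times X \supset M_0$; pick $n \in \mathbb{Z}$ and $z \in M_0$ with $d((T^p \times T^q)^n(x,x), z) < \eta$. Then for each $k \in [0,L]$, since $(T^p \times T^q)^k z \in M_0$,
\[
d\bigl((T^p \times T^q)^{n+k}(x,x),\, M_0\bigr) \le d\bigl((T^p \times T^q)^{n+k}(x,x),\, (T^p \times T^q)^k z\bigr) < 2\delta,
\]
so $(T^p \times T^q)^{n+k}(x,x)$ lies at distance $\ge 3\delta - 2\delta = \delta > 0$ from $\overline{U \times U}$ and in particular not in $U \times U$. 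This yields a gap of length $L+1$ in $N_{T^p \times T^q}((x,x), U\times U)$ starting at $n$; since $L$ is arbitrary, the return set has arbitrarily long gaps, hence is not syndetic.

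The main obstacle is really Step~1: one must identify a natural candidate for a minimal point off the diagonal and verify minimality through the Glasner--Weiss criterion after transporting via $\Phi$. The convenient algebraic identity $A(0) \cdot B = I \in \mathrm{COM}(\Gamma_0)$ makes this transparent. Once $M_0$ is in hand, Step~2 is immediate from Theorem~\ref{thm-non-minimal} together with the fact that points in minimal sets are minimal points, and Step~3 is a routine shadowing argument using uniform continuity and density of the orbit of $(x,x)$.
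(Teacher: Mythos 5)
Your proof is correct and, in its essential mechanism, takes a genuinely different route from the paper's. The paper argues by contradiction: assuming that for some $x_0$ and every $\ep>0$ there is $x_\ep$ with $E_{\ep;x_0,x_\ep}$ syndetic, it uses the transitivity of $(x_\ep,x_\ep)$ (Theorem~\ref{thm-tran}) to cover $X\times X$ by finitely many $\tau^{-i}(\ol{U_\ep}\times\ol{U_\ep})$, forces $(x_0,x_0)$ into every $\tau$-orbit closure, and concludes $(X\times X,\tau)$ would be minimal --- a contradiction. It then extracts a uniform $\d$ by a Lebesgue-number argument over the cover $\{B_{\ep_z}(z)\}_{z\in X}$. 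You instead isolate a $\tau$-minimal set $M_0\subset X\times X$, observe via Theorem~\ref{thm-non-minimal} that $M_0\cap\Delta=\emptyset$ (since every point of a minimal set is a minimal point), set $\d=d(M_0,\Delta)/4$, and run a direct shadowing argument: the dense orbit of $(x,x)$ visits an $\eta$-neighborhood of $M_0$, after which it stays $\d$-far from $\ol{U\times U}$ for $L$ consecutive steps. This is more constructive (it names a specific $\d$ and directly exhibits arbitrarily long gaps) and bypasses the Lebesgue-number step, since $\d$ is uniform by design. Both arguments rely on Theorems~\ref{thm-non-minimal} and~\ref{thm-tran}.

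One remark on Step~1: the explicit algebraic identification of a minimal point $z_0=(\Gamma_0,B\Gamma_0)$ via the Glasner--Weiss criterion (which you verify correctly: $A(0)B=I\in {\rm COM}(\Gamma_0)$) is not actually needed. Any compact $\tau$-invariant set contains a minimal subset, and by Theorem~\ref{thm-non-minimal} none of its points can lie on $\Delta$; so the \emph{existence} of an $M_0$ disjoint from the diagonal is automatic. You would, however, still need the transitivity of $(x,x)$ from Theorem~\ref{thm-tran} to guarantee the orbit of $(x,x)$ actually approaches that fixed $M_0$, so that your uniform $\d$ works for every $x$. The rest of the argument --- the triangle-inequality estimate $d(M_0,\ol{U\times U})\ge 3\d$ and the uniform-continuity shadowing --- is sound.
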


\begin{proof}
Let $(X,T)$ be the horocycle flow described in Theorem \ref{thm-non-minimal}. For $p\neq q\in \N$, let $\tau=T^p\times T^q$. Then by Theorem \ref{thm-tran}, $(X\times X,\tau)$ is not minimal, and for each point $x\in X$, $(x,x)$ is a $\tau$-transitive point.

\medskip
\noindent \textbf{Claim:} \textit{
For each point $x_0\in X$, there is some $\ep_0>0$ such that
for any $x\in X$ we have that
$$E_{\ep_0;x_0,x}=\{n\in \Z: \tau^n (x,x)\in U_{\ep_0}\times U_{\ep_0}\}$$
is not syndetic, where $U_{\ep_0}=B_{\ep_0}(x_0)=\{x\in X: \rho(x_0,x)<\ep_0\}$.
}

\medskip
\noindent \textit{Proof of Claim:}\
If Claim does not hold, then there is some $x_0\in X$ such that for any $\ep>0$, there is some $x_{\ep}\in X$ with $E_{\ep; x_0,x_\ep}$ being syndetic. For any fixed such $\ep$, let
$$A_\ep\triangleq \overline{\{\tau^n(x_\ep,x_\ep): n\in E_{\ep; x_0,x_\ep}\}}\subset \overline{U_\ep}\times \overline{U_\ep},$$
where $U_\ep=B_\ep(x_0)$.
Since $E_{\ep;x_0,x_\ep}$ is syndetic, there is some $L_\ep\in \N$ such that
$$\bigcup_{i=1}^{L_\ep}\tau^{-i} \overline{U_\ep}\times \overline{U_\ep}\supset \bigcup_{i=1}^{L_\ep}\tau^{-i} A_\ep\supset \overline{\O}\big((x_\ep,x_\ep), \tau \big).$$
As $(x_\ep,x_\ep)$ is $\tau$-transitive,
$$\overline{\O}\big((x_\ep,x_\ep), \tau \big)=X\times X.$$
Hence
$$ X\times X =\bigcup_{i=1}^{L_\ep}\tau^{-i} \overline{U_\ep}\times \overline{U_\ep}.$$
Thus for each $(x_1,x_2)\in X\times X$,
$$(x_1,x_2)\in  X\times X =\bigcup_{i=1}^{L_\ep}\tau^{-i} \overline{U_\ep}\times \overline{U_\ep}.$$
There is some $i\in \{1,2,\ldots, L_\ep\}$ such that
$$(x_1,x_2)\in \tau^{-i} \overline{U_\ep}\times \overline{U_\ep}.$$
It follows that
$$\overline{\O((x_1,x_2),\tau)}\cap \overline{U_\ep}\times \overline{U_\ep}\neq \emptyset.$$
Since $\ep$ is arbitrary, it follows that
$$(x_0,x_0)\in \overline{\O((x_1,x_2),\tau)}.$$
Thus
$$X\times X= \overline{\O((x_0,x_0),\tau)}\subset \overline{\O((x_1,x_2),\tau)}.$$
So for each $(x_1,x_2)\in X\times X$, $\overline{\O((x_1,x_2),\tau)}=X\times X$.
That is, $(X\times X,\tau)$ is minimal, a contradiction!
The proof of Claim is complete. \hfill $\square$

\medskip

By the Claim, for any $z\in X$, there is some $\ep_z>0$ such that for any $x\in X$,
$$E_{\ep_z; z,x}=\{n\in \Z: \tau^n (x,x)\in B_{\ep_z}(z)\times B_{\ep_z}(z)\}$$ is not syndetic.
So, $\{B_{\ep_z}(z)\}_{z\in X}$ is an open cover of $X$. Let $\d>0$ be the Lebesgue number of $\{B_{\ep_z}(z)\}_{z\in X}$. Then for any open subset $U$ of $X$ with ${\rm diam}(U)<\d$, there is some $z\in X$ such that $U\subset B_{\ep_z}(z)$. For any $x\in X$, the set
$$N_{\tau}((x,x), U\times U)=\{n\in \Z: \tau^n(x,x)\in U\times U\}\subset E_{\ep_z;z,x}$$
is not syndetic. The proof is complete.
\end{proof}

To summarize, we have the following results.

\begin{thm}\label{summry}
Let $(X,T)$ be a minimal system $d\in \N$ and $\d>0$. Let
$$E(x;\d)=\{n\in \Z: \max_{1\le i\le d} \rho (T^{in}x,x)<\d\}=\{n\in \Z: \widehat{\rho}(\tau^{n}_dx^{(d)}, x^{(d)})<\d\}.$$
\begin{enumerate}
  \item For any $\d>0$
$$X_\d^{ps}\triangleq \{x\in X: E(x;\d) \ \text{is piecewise syndetic}\}$$
is a dense set of $X$.
  \item There is some minimal system $(X,T)$ and $\d>0$ such that for each $x\in X$, $E(x;\d)$ is not syndetic.
  \item If in addition $(X,T)$ is weakly mixing, then there exists a dense $G_\delta$ subset $\Omega$ such that
  for each $x\in \Omega$ and each $\delta>0$, $E(x;\d)$ is piecewise syndetic. 
\end{enumerate}
\end{thm}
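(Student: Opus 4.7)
The plan is to handle the three items in turn, as each reduces quickly to machinery already in place. For (1) I would observe that this is essentially Theorem B with a scale parameter: given a nonempty open $V\subseteq X$, pick an open $U\subseteq V$ with $\mathrm{diam}(U)<\delta$, apply Theorem B inside $U$ to find $x\in U$ with $N_{\tau_d}(x^{(d)},U^{d})\in\F_{ps}$, and note that the diameter bound forces every such $n$ to satisfy $\rho(T^{in}x,x)<\delta$ for $1\le i\le d$; this puts $x$ in $X_\delta^{ps}\cap V$. For (2) (assuming $d\ge 2$, since a minimal system always has syndetic returns for $d=1$) I would take the horocycle example of Theorem \ref{not-syndetic} with $p=1$, $q=2$; letting $\delta_{0}$ be the constant it provides and setting $\delta=\delta_{0}/3$, the ball $U=B_{\delta}(x)$ has diameter strictly less than $\delta_{0}$ and contains $x$, so $N_{T\times T^{2}}((x,x),U\times U)$ is not syndetic. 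Since $E(x;\delta)$ is contained in this set, it also fails to be syndetic.

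For (3) I would take $\Omega$ to be the dense $G_{\delta}$ set $X_{0}$ produced by Proposition \ref{thm-Glasner}(2), so that for every $x\in\Omega$, the diagonal point $x^{(d)}$ is a transitive point of $(X^{d},\tau_{d})$. The central step is to upgrade transitivity to the $M$-system property, i.e., to show that the set $M_{\min}$ of $\tau_{d}$-minimal points of $X^{d}$ is dense. By compactness $M_{\min}\neq\emptyset$, and it is obviously $\tau_{d}$-invariant. A direct computation shows $\tau_{d}\circ T^{(d)}=T^{(d)}\circ\tau_{d}$; consequently $T^{(d)}$ maps $\tau_{d}$-minimal sets to $\tau_{d}$-minimal sets, and therefore $M_{\min}$ is $T^{(d)}$-invariant as well. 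Thus $\overline{M_{\min}}$ is a nonempty closed subset of $X^{d}$ invariant under the group $\langle\tau_{d},T^{(d)}\rangle$, and Proposition \ref{thm-Glasner}(1) forces $\overline{M_{\min}}=X^{d}$. Once $(X^{d},\tau_{d})$ is known to be an $M$-system with $x^{(d)}$ transitive, a standard argument (pick a minimal point close to $x^{(d)}$, then shadow its syndetic returns to a small neighborhood by an orbit segment of $x^{(d)}$ of arbitrarily large length) gives $N_{\tau_{d}}(x^{(d)},V)\in\F_{ps}$ for every neighborhood $V$ of $x^{(d)}$. Taking $V=B_{\delta}(x)^{d}$ for arbitrary $\delta>0$ yields $E(x;\delta)\in\F_{ps}$, as required.

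The main obstacle I expect is the density of $\tau_{d}$-minimal points inside $(X^{d},\tau_{d})$ in part (3); it rests entirely on the commutation of $\tau_{d}$ with $T^{(d)}$, which is what allows Proposition \ref{thm-Glasner}(1) to propagate minimality from a single point out to a dense set. Parts (1) and (2) are essentially bookkeeping: (1) is Theorem B with a diameter adjustment, and (2) is Theorem \ref{not-syndetic} combined with the elementary observation that a subset of a non-syndetic set is non-syndetic.
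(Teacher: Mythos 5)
Your proposal is correct and takes essentially the same route as the paper. Parts (1) and (2) are handled exactly as the paper intends: the paper's proof opens with ``It remains to show (3),'' treating (1) as an immediate consequence of Theorem~B applied inside a small-diameter open set, and (2) as an immediate consequence of Theorem~\ref{not-syndetic} (your caveat that $d\ge 2$ is required for (2) is correct, and tacit in the paper). For (3) the paper also uses Proposition~\ref{thm-Glasner}(2) to produce $X_0$ and then asserts, without argument, ``As $(X^d,\tau_d)$ is an $M$-system.'' Your contribution is to actually justify this: you show that the set $M_{\min}$ of $\tau_d$-minimal points is nonempty, $\tau_d$-invariant, and, by the commutation $\tau_d\circ T^{(d)}=T^{(d)}\circ\tau_d$, also $T^{(d)}$-invariant; since $N_d(X)=X^d$ in the weakly mixing case and $(N_d(X),\langle\tau_d,T^{(d)}\rangle)$ is minimal by Proposition~\ref{thm-Glasner}(1), it follows that $\overline{M_{\min}}=X^d$. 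This is a clean way to close the gap the paper leaves implicit, and the rest (piecewise syndetic return times for a transitive point of an $M$-system, via \cite{HY05} or \cite{Ak97}) matches the paper verbatim.
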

\begin{proof} It remains to show (3). Note that by Proposition \ref{thm-Glasner}, there is a dense $G_\delta$ subset of $X_0$
such that for each $x\in X_0$, $x^{(d)}$ is a transitive point of $(X^d,\tau_d)$. As $(X^d,\tau_d)$ is an $M$-system,
for each neighborhood $U$ of $x$, $N_{\tau_d}(x^{(d)}, U^d)$ is piecewise syndetic.
\end{proof}

For some discussion related to (3) of Theorem \ref{summry}, we refer to \cite{G94} and \cite{HSY-19-1}.

\subsection{Weakly mixing systems}

Let $(X,T)$ be a t.d.s. Put
$$A=\{x\in X: (x,x)\ \text{is minimal for}\ T\times T^2\}.$$

\begin{thm}\label{wm-1}
Let $(X,T)$ be a minimal weakly mixing system. Then
we have the following possibilities
\begin{enumerate}
\item $A$ is equal to $X$;
\item $A$ is not empty and it is a subset of first category in $X$;
\item $A$ is empty.
\end{enumerate}
\end{thm}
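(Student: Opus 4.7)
The plan is to exploit Proposition \ref{thm-Glasner}(2), which supplies a dense $G_\delta$ subset $X_0 \subseteq X$ (depending only on the fact that $(X,T)$ is minimal and weakly mixing) such that for every $x \in X_0$, the $\tau_2$-orbit closure of $(x,x)$ is all of $X \times X$. Writing $\tau = T \times T^2$, the whole argument will proceed by dichotomizing on whether $A$ meets $X_0$.

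First I would handle the case $A \cap X_0 \neq \emptyset$. If $x \in A \cap X_0$, then $(x,x)$ is simultaneously a $\tau$-transitive point of $X \times X$ (since $x \in X_0$) and a $\tau$-minimal point (since $x \in A$). A transitive point whose orbit closure is also a minimal set forces the ambient system to be minimal; hence $(X \times X, \tau)$ is itself minimal. In any minimal system every point is a minimal point, so in particular $(y,y)$ is $\tau$-minimal for every $y \in X$, which yields $A = X$ and places us in alternative (1).

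Next I would handle the complementary case $A \cap X_0 = \emptyset$, i.e.\ $A \subseteq X \setminus X_0$. Because $X_0$ is a dense $G_\delta$ subset of the compact metric space $X$, its complement $X \setminus X_0$ is a countable union of closed sets with empty interior, hence a set of first category. Consequently $A$ is of first category in $X$. According as $A$ is empty or nonempty, this lands us in alternative (3) or alternative (2). Since cases (1), (2), (3) are mutually exclusive and exhaustive, the trichotomy is established.

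I do not anticipate a serious obstacle; the proof is essentially a clean two-line use of Proposition \ref{thm-Glasner}(2) combined with the trivial remark that a minimal point with dense orbit sits in a minimal system. The only subtlety is appreciating that the alternative (2) simply accommodates the possibility that $A$ is dense yet meager (so one cannot strengthen ``first category'' to ``nowhere dense'' without additional hypotheses), and this is already built into the statement.
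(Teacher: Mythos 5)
Your proof is correct and takes essentially the same approach as the paper: both invoke Proposition \ref{thm-Glasner}(2) to get a dense $G_\delta$ set $X_0$ of $\tau$-transitive diagonal points, and both observe that if $A$ meets $X_0$ then $(X\times X,\tau)$ is minimal (giving $A=X$), while otherwise $A\subseteq X\setminus X_0$ is of first category. The only cosmetic difference is that you dichotomize on $A\cap X_0$ being empty or not, whereas the paper dichotomizes on $(X\times X,\tau)$ being minimal or not; these are contrapositives of the same argument.
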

\begin{proof} It is not difficult to check that $(X\times X, T\times T^2)$ is weakly mixing.
By Proposition \ref{thm-Glasner}, there is a dense $G_\delta$ subset $X_0$ of $X$ such that for each
$x\in X_0$, $(x,x)$ is a transitive point for $T\times T^2$.

If $(X\times X, T\times T^2)$ is minimal, it is clear that $A=X$.
Now we assume that $(X\times X, T\times T^2)$ is not minimal. This implies that
$A\cap X_0=\emptyset$, and thus $A\subset X\setminus X_0$ is a first category subset.
\end{proof}

As we said in the introduction, a double minimal system satisfies Theorem \ref{wm-1}-(1). The system in Theorem A satisfies Theorem \ref{wm-1}-(3).
Now we give a system satisfying Theorem \ref{wm-1}-(2) by a modification of the double minimal systems.

\begin{exam}
There is some weakly mixing minimal system $(X,T)$ such that $(X\times X, T\times T^2)$ is not minimal and there are points $x\in X$ such that $(x,x)$ is $T\times T^2$-minimal.
\end{exam}

\begin{proof}
Let $(Y,S)$ be a doubly minimal system. Let $(X,T)=(Y\times Y, S\times S^2)$. Then it is
weakly mixing minimal \cite{AM85}. We verify that $(X,T)$ is what we need. For all $y\in Y$,
let ${\bf x}=(y,y)\in X$. We show that $({\bf x},{\bf x})$ is $T\times T^2$ minimal and its orbit is not dense.

Since $(Y,S)$ be a doubly minimal system,  $(Y^3, S\times S^2\times S^4)$ is minimal.
Thus $({\bf x},{\bf x})=(y,y,y,y)$ is minimal under action $S\times S^2\times S^2\times S^4$,
since $(y,y,y)$ is $S\times S^2\times S^4$-minimal.

Note that
\begin{equation*}
  \begin{split}
     \overline{\O}\big(({\bf x},{\bf x}), T\times T^2\big)& =\overline{\O}\big(((y,y),(y,y)), T\times T^2\big)\\& =\{\big((z_1,z_2),(z_2,z_3)\big): z_1,z_2,z_3\in Y\}\\& \neq Y^4=X\times X.
   \end{split}
\end{equation*}
Thus $(X\times X,T\times T^2)$ is not minimal.
\end{proof}

\section{The PI case}

In this section we will prove Theorem C. We start with the definition of PI systems.

\subsection{PI systems}
We first recall some definitions.
Let $(X,T)$ be a t.d.s.
A pair $(x,y)\in X\times X$ is said to be {\em proximal} if $\liminf_{n\to \infty} \rho(T^nx,T^ny)=0$.
The set of all proximal pairs of $X$ is denoted by $P(X,T)$.
An extension $\pi :
(X,T) \rightarrow (Y,S)$ is called {\em proximal} if each pair in $R_{\pi}=\{(x_1,x_2): \pi(x_1)=\pi(x_2)\}$ is proximal. An extension  $\pi$ is {\em  almost one to one} if
$\{x\in X: |\pi^{-1}(\pi(x))|=1\}$ is a dense $G_\delta$ subset.
An extension $\pi$ is an {\em equicontinuous} or {\em isometric} one if for every $\epsilon >0$ there is
$\delta >0$ such that $(x,y) \in R_{\pi}$ and $\rho (x,y)<\delta$ imply $\rho(T^nx,T^ny)<\epsilon$,
for every $n \in \Z$.
An extension $\pi$ is a {\em weakly
mixing extension} if $(R_\pi, T\times T)$ as a subsystem of the product
system $(X\times X, T\times T)$ is transitive.
An extension $\pi$ is called a {\em relatively incontractible (RIC) extension}\ if it is open and for every $n \ge 1$
the minimal points are dense in the relation
$$
R^n_\pi = \{(x_1,\dots,x_n) \in X^n : \pi(x_i)=\pi(x_j),\ \forall \ 1\le i
\le j \le n\}.
$$

We say that a minimal system $(X,T)$ is a
{\em strictly PI system} (PI means proximal-isometric) if there is an ordinal $\eta$
and a family of systems
$\{(W_\iota,w_\iota)\}_{\iota\le\eta}$
such that

\begin{enumerate}
 \item $W_0$ is the trivial system,
  \item for every $\iota<\eta$ there exists an extension
$\phi_\iota:W_{\iota+1}\to W_\iota$ which is
either proximal or equicontinuous,
  \item for a
limit ordinal $\nu\le\eta$ the system $W_\nu$
is the inverse limit of the systems
$\{W_\iota\}_{\iota<\nu}$,
  \item $W_\eta=X$.
\end{enumerate}

We say that $(X,T)$ is a {\em PI-system} if there
exists a strictly PI system $\tilde X$ and a
proximal extension $\theta:\tilde X\to X$.





Finally we have the structure theorem for minimal systems
(see Ellis-Glasner-Shapiro \cite{EGS})

\begin{prop}[Structure theorem for minimal systems]\label{structure}
Let $(X,T)$ be a minimal system. Then we have the following diagram:

$$
\xymatrix{
  X          & X_\infty \ar[d]^{\pi_\infty} \ar[l]^{\theta} \\
                & Y_\infty             }
$$
\medskip

\noindent where $X_\infty$ is a proximal extension of $X$ and a RIC
weakly mixing extension of the strictly PI-system $Y_\infty$.
The extension $\pi_\infty$ is an isomorphism (so that
$X_\infty=Y_\infty$) if and only if  $X$ is a PI-system.
\end{prop}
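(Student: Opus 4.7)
The plan is to follow the classical construction of the canonical PI-tower, as developed by Furstenberg, Ellis, Bronstein, Glasner, Shapiro, McMahon and Veech in the 1970s. The proof naturally splits into three stages: (a) replace $X$ by a proximal extension with good algebraic properties; (b) construct a PI-tower under this extension by transfinite induction; (c) verify that the residual extension at the top is weakly mixing.

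First I would reduce to a RIC situation. Using the Ellis enveloping semigroup $E(X)$ and a fixed minimal left ideal $M\subset E(X)$ with idempotent $u\in M$, one constructs a proximal extension $\theta:X_\infty\to X$ with the property that \emph{every} factor map $X_\infty\to Z$ onto a minimal system is RIC. Concretely, $X_\infty$ can be realized as the space of minimal quasi-factors of $X$, or equivalently as the image of $X$ inside the universal $M$-ambit; the fibers of $\theta$ are proximal because they lie in a single $uM$-orbit. This step is technically routine once the enveloping semigroup framework is set up.

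Second, starting from the trivial system $W_0$, I would build the tower $\{W_\iota\}_{\iota\le\eta}$ by transfinite induction inside $X_\infty$. At a successor ordinal $\iota+1$, given the factor map $\phi_\iota:X_\infty\to W_\iota$, I form the relative regionally proximal relation $RP(\phi_\iota)\subset R_{\phi_\iota}$ and let $W_{\iota+1}$ be the quotient $X_\infty/{\sim}$ corresponding to the smallest closed invariant equivalence relation containing $RP(\phi_\iota)$; this yields the maximal intermediate equicontinuous extension. If $RP(\phi_\iota)=R_{\phi_\iota}$, so no nontrivial equicontinuous intermediate extension exists, I instead take the maximal intermediate proximal extension of $W_\iota$ inside $X_\infty$; if even that is trivial, the construction halts and I set $Y_\infty:=W_\iota$. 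At limit ordinals I take inverse limits. Since $X_\infty$ is a set of bounded cardinality, the chain must stabilize at some ordinal $\eta$, producing a strictly PI factor $Y_\infty$ of $X_\infty$ with PI-tower $\{W_\iota\}_{\iota\le\eta}$.

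Third, I would verify that the residual extension $\pi_\infty:X_\infty\to Y_\infty$ is weakly mixing. This rests on the Furstenberg–Veech dichotomy for RIC extensions: a RIC extension of minimal systems is weakly mixing if and only if it admits no nontrivial intermediate equicontinuous extension and no nontrivial intermediate proximal extension (equivalently, the relative regionally proximal relation equals the whole fibered product). Because the PI-tower construction was continued until both possibilities were exhausted, $\pi_\infty$ satisfies exactly this condition, so $(R_{\pi_\infty},T\times T)$ is a transitive subsystem of $X_\infty\times X_\infty$. Finally, the ``isomorphism iff PI'' clause follows directly from the definition: if $\pi_\infty$ is an isomorphism then $X_\infty=Y_\infty$ is strictly PI and $\theta$ exhibits $X$ as PI; conversely, if $X$ is PI, one checks by induction along \emph{any} defining PI-tower of $X$ that the canonical tower above cannot be enlarged by a weakly mixing piece, forcing $\pi_\infty$ to collapse.

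The main obstacle is the third step: the Furstenberg–Veech dichotomy. Showing that a RIC extension with no equicontinuous or proximal intermediate factor is forced to be weakly mixing requires delicate analysis of the relative regionally proximal relation and of $\tau$-topology arguments on the group $\mathcal{G}(X_\infty|Y_\infty)$ of the extension, together with the openness and minimality-in-fibers guaranteed by the RIC property. The first two stages are essentially bookkeeping once this dichotomy is granted.
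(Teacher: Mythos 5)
This proposition is not proved in the paper at all: it is quoted verbatim from Ellis--Glasner--Shapiro \cite{EGS}, so your attempt has to be measured against the classical proof of the structure theorem rather than against anything in the text. At that level your outline reproduces the right ingredients (RIC-ification by proximal extensions, transfinite construction of a PI tower, the dichotomy ``RIC with no nontrivial intermediate equicontinuous extension $\Rightarrow$ weakly mixing''), but it deviates from the actual construction in a way that leaves a genuine gap. In step (a) you posit a single proximal extension $\theta:X_\infty\to X$ such that \emph{every} factor map $X_\infty\to Z$ onto a minimal system is RIC, and then build the whole tower inside this fixed $X_\infty$. No such once-and-for-all RIC-ifying extension is provided by the enveloping-semigroup machinery, and this is precisely the delicate point of the EGS proof: the RIC property is relative to a given factor, and it is restored by the shadow-diagram construction applied anew at each successor ordinal, which enlarges the top flow as well as the bottom one. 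Consequently $X_\infty$ is only available at the end, as an inverse limit of the successive proximal modifications, and $\theta$ is proximal because proximal extensions are closed under composition and inverse limits. Since your step (b) needs each map $X_\infty\to W_\iota$ to be RIC in order to invoke the dichotomy, the unproved claim in step (a) undermines the induction.

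Two further points. First, your halting rule ``if no equicontinuous step exists, take the maximal intermediate proximal extension of $W_\iota$ inside $X_\infty$'' is both unjustified (the existence of a maximal proximal intermediate factor is not established, since the relative proximal relation is not an equivalence relation and the factor/subgroup Galois correspondence is not exact) and, for RIC maps, vacuous: a RIC extension whose fibers are proximal is an isomorphism, because the minimal points of $R_\pi$ are then exactly the diagonal and RIC forces them to be dense. So as written your tower would contain no proximal steps at all; in the correct proof the proximal steps of the strictly PI tower for $Y_\infty$ are exactly the ones produced by the shadow diagrams you omitted. Second, the direction ``$X$ PI $\Rightarrow\pi_\infty$ is an isomorphism'' is not a formality: it requires showing that a nontrivial weakly mixing RIC extension cannot occur over a PI system (equivalently, a group-theoretic argument with the Ellis groups, or the maximality/functoriality of the canonical tower), and ``one checks by induction along any defining PI-tower'' conceals precisely that argument. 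If you intend to use this proposition, do as the paper does and cite \cite{EGS} (or Glasner's and Veech's expositions), or else fill in the shadow-diagram lemma and the RIC dichotomy in full.
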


\subsection{The proof of Theorem C} 

For minimal systems, one can characterize PI as follows, see van der Woude \cite{Wo85}.
Let $(X,T)$ be a minimal t.d.s. Then $X$ is PI if and only if  any $M$-subsystem of $(X\times X, T\times T)$
is minimal if and only if for any $d\ge 2$ any $M$-subsystem of $(X^d, T^{(d)})$ is minimal.

\medskip
Now we are ready to show Theorem C.

\begin{proof}[Proof of Theorem C]

Fix $d\ge 2$. 
By the definition, a PI system
is a factor of a strictly PI system. The property that any $M$-subsystem of $(X^d, \tau_d)$ is minimal, is preserved by the factor map and the inverse limit. Thus it suffices to show this property can be preserved by equicontinuous extensions and proximal extensions.

\medskip

Let $\xi: (Z, T)\rightarrow (W,T)$ be an equicontinuous extension of
systems. Then for any $z\in Z$, if $\xi(z)$ is minimal in $(W,T)$,
then $z$ is minimal in $(Z,T)$ (see \cite[Chapter II, Lemma 1.2]{G76} for example).
Now let $\pi:(X,T)\rightarrow (Y,T)$ be an equicontinuous extension.
Then $\pi^{(d)}: (X^d,\tau_d)\rightarrow (Y^d,\tau_d)$ is also equicontinuous.
Assume that any  $M$-subsystem of $(Y^d, \tau_d)$ is minimal. If $(Z,\tau_d)$ is an $M$-subsystem of $(X^d, \tau_d)$,
then by result above any point of $Z$ is $\tau_d$-minimal. Thus $Z$ is minimal itself as $Z$ is transitive.

\medskip

Now let $\pi:(X,T)\rightarrow (Y,T)$ be a proximal extension. It is easy to verify that
$\pi^{(d)}: (X^d,\tau_d)\rightarrow (Y^d,\tau_d)$ is also proximal. Assume that any
$M$-subsystem of $(Y^d, \tau_d)$ is minimal. For an $M$-subsystem $(Z,\tau_d)$ of $(X^d, \tau_d)$,
we will show that it is $\tau_d$-minimal. To do this let $(x_1,x_2,\ldots,x_d)$ be a transitive point of $(Z,\tau_d)$.
Then it is piecewise syndetic recurrent
(see for example \cite[Lemma 2.1]{HY05}). That is, for ant $\ep>0$,
\begin{equation}\label{q1}
 B= \{n\in \Z: \rho(T^nx_1,x_1)<\ep, \rho(T^{2n}x_2,x_2)<\ep, \ldots, \rho(T^{dn}x_d,x_d)<\ep\}\in \F_{ps}.
\end{equation}
Let $(y_1,y_2,\ldots,y_d)=\pi^{(d)}(x_1,x_2,\ldots,x_d)$. Then $\overline{\O}((y_1,y_2,\ldots,y_d),\tau_d)$ is
an $M$-system in $(Y^d, \tau_d)$. By the hypothesis, $(y_1,y_2,\ldots,y_d)$ is $\tau_d$-minimal.
Let $(z_1,z_2,\ldots,z_d)\in X^d$ be a minimal point of $(X^d,\tau_d)$ such that
$(y_1,y_2,\ldots,y_d)=\pi^{(d)}(z_1,z_2,\ldots,z_d)$. Note that we have $\pi(x_1)=\pi(z_1),\pi(x_2)=\pi(z_2),\ldots, \pi(x_d)=\pi(z_d),$
and hence $$(x_1,z_1)\in P(X,T),(x_2,z_2)\in P(X,T^2),\ldots, (x_d, z_d)\in P(X,T^d),$$ where $P(X,T^j)$ is the set of proximal pairs of $(X,T^j), 1\le j\le d$. It follows \cite[Theorem 3.1]{Ak97} that
\begin{equation}\label{q2}
   \{n\in \Z: \rho(T^{jn}x_2,T^{jn}z_2)<\ep\}\in \F_{ts}, \quad 1\le j\le d.
\end{equation}
Since $\F_{ts}$ is a filter and $\F^*_{ps}=\F_{ts}$, by \eqref{q1} and \eqref{q2} we have
$$B\cap \bigcap_{j=1}^d \{n\in \Z: \rho(T^{jn}x_2,T^{jn}z_2)<\ep\}\neq\emptyset.$$
In particular, we have
$$\rho(T^nz_1,x_1)<2\ep,\ \rho(T^{2n}z_2,x_2)<2\ep,\ \ldots,\ \rho(T^{dn}z_d,x_d)<2\ep.$$
Since $\ep$ is arbitrary, it follows that  $(x_1,x_2,\ldots, x_d)\in \overline{\O}((z_1,z_2,\ldots, z_d),\tau_d)$.
By $\tau_d$-minimality of $(z_1,z_2,\ldots,z_d)$, $(x_1,x_2,\ldots, x_d)$ is a minimal point of $(X^d,\tau_d)$ too. Thus $(Z,\tau_d)$ is minimal. The proof is complete.
\end{proof}

\begin{rem}
Unlike van der Woude's result, the converse of Theorem C is not true. For a doubly minimal system $(X,T)$,
$(X^d, \tau_d)$ is minimal for all $d\in \N$ \cite{AM85}. A doubly minimal system is weakly mixing, and hence it is not PI.
\end{rem}




\subsection{Some open questions}
In Section 2, we have shown the existence of some minimal weakly mixing systems without multiply
minimal points. Thus we ask
\begin{ques}
Is there a PI minimal system $(X,T)$ without any multiply minimal points?
\end{ques}

We remark that if $(X,T)$ is a minimal mean equicontinuous system (which implies that $(X,T)$
is uniquely ergodic), then there is a measurable set $X_0$ of $X$ with measure $1$
such that for each $x\in X_0$, $x^{(d)}$ is $\tau_d$-minimal. For details, see \cite{CTY}.

\medskip

In the previous section we have shown that if $(X,T)$ be a minimal system and $d\in \N$,
then for any non-empty open subset $U$ of $X$, there is some $x\in U$ such that
$$N_{\tau_d}(x^{(d)}, U^d)=\{n\in \Z: \tau_d^nx^{(d)}\in U\times \ldots\times U \}$$
is piecewise syndetic. We conjecture that this result is sharp. That is, we have the following

\begin{ques} Is there a minimal PI-system $(X,T)$ such that $(x,x)$ is not
$\F_{ps}$ recurrent for any $x\in X$ under $T\times T^2$?
\end{ques}

Question 1 and Question 2 are very closely related as the following theorem shows.

\begin{thm} If there is $x$ in a PI minimal system $(X,T)$ such that $(x,x)$ is $\F_{ps}$-recurrent under $T\times T^2$,
then $(x,x)$ is minimal under $T\times T^2$. That is, the positive answer to Question 1 implies the positive answer to Question 2.
\end{thm}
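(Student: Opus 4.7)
The plan is to identify $\F_{ps}$-recurrence of $(x,x)$ with the statement that its orbit closure under $\tau_2 = T\times T^2$ is an $M$-system, and then to invoke Theorem C directly.

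First I would observe that, by definition of $\F_{ps}$-recurrence, the hypothesis says that for every neighborhood $U$ of $x$ in $X$,
\[
N_{\tau_2}\bigl((x,x), U\times U\bigr) = \{n\in\Z : T^n x\in U,\ T^{2n}x\in U\}
\]
belongs to $\F_{ps}$. Let $Z = \overline{\O}((x,x),\tau_2)\subseteq X\times X$. Then $(x,x)$ is by construction a transitive point of $(Z,\tau_2)$, and the neighborhoods of $(x,x)$ in $Z$ are exactly the traces on $Z$ of sets of the form $U\times U$ for $U$ a neighborhood of $x$ in $X$. Hence $(x,x)$ is a transitive point whose return-time sets to every neighborhood in $Z$ are piecewise syndetic.

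Next I would appeal to the characterization of $M$-systems recalled in Section~3.1 (namely that a t.d.s.\ is an $M$-system iff it has a transitive point whose return sets to neighborhoods are piecewise syndetic, cf.\ \cite[Lemma~2.1]{HY05} or \cite[Theorem~3.1]{Ak97}). This immediately gives that $(Z,\tau_2)$ is an $M$-subsystem of $(X\times X, T\times T^2) = (X^2,\tau_2)$.

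Finally, since $(X,T)$ is a PI minimal system, Theorem~C (applied with $d=2$) says that every $M$-subsystem of $(X^2,\tau_2)$ is minimal. Therefore $(Z,\tau_2)$ is minimal, and in particular $(x,x)$ is a minimal point of $T\times T^2$. The deduction is short and essentially tautological once the correct dictionary is in place; no step looks genuinely difficult, the only thing to verify carefully being that $\F_{ps}$-recurrence for all neighborhoods $U$ of $x$ (in $X$) really is the same as the $M$-system criterion for $(Z,\tau_2)$, which is immediate because products $U\times U$ form a neighborhood base of $(x,x)$ in $X\times X$ and hence, when intersected with $Z$, a neighborhood base of $(x,x)$ in $Z$.
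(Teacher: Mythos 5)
Your argument is exactly the paper's: the orbit closure of an $\F_{ps}$-recurrent point is an $M$-subsystem, and Theorem~C with $d=2$ then forces that $M$-subsystem to be minimal. You have simply spelled out the neighborhood-base bookkeeping that the paper leaves implicit; the approach and the invoked results are identical.
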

\begin{proof}
It follows from Theorem C, since the closure of a $\F_{ps}$-recurrent point is an $M$-subsystem.
\end{proof}

For a doubly minimal system $(X,T)$ and $d\in\N$, we know that $(X^d,\tau_d)$ is minimal. We would like to
know if a similar result holds for some PI-system. That is, we have

\begin{ques} Is there a minimal PI system $(X,T)$ which is a non-trivial open proximal extension of an
equicontinuous system $(Y,T)$ such that for each $x\in X$, $(x,x)$ is minimal for $T\times T^2$?
\end{ques}

We remark that by the main result in \cite{GHSWY}, it is easy to show that there is a dense $G_\delta$ set
$Y_0$ of $Y$ such that for each $y\in Y_0$ there is a dense $G_\delta$ set $X_y$ of $\pi^{-1}y$ with the property that
for each $x\in X_y$, $(x,x)$ is a transitive point of $W=:\overline{\O}((x,x),T\times T^2)$, where $\pi:X\lra Y$
is the factor map and $W=(\pi\times \pi)^{-1}(\pi\times \pi)W.$


\end{document}